\definecolor{darkblue}{rgb}{0.0, 0.0, 0.45}
\date{\today}
\renewcommand\thesection{\arabic{section}}
\newtheorem{Thm}{Theorem} [section]
\newtheorem{Prop}[Thm]{Proposition}
\newtheorem{Fact}[Thm]{Fact}
\newtheorem{Lem}[Thm]{Lemma}
\newtheorem{Cor}[Thm]{Corollary}
\newtheorem{As}[Thm]{Assumption}
\newtheorem{Def}[Thm]{Definition}
\newtheorem{Rem}[Thm]{Remark}
\newcommand{\PP}{\mathds{P}}
\newcommand{\EE}{\mathds{E}}
\newcommand{\R}{\mathbb{R}}
\newcommand{\N}{\mathbb{N}}
\newcommand{\eps}{\varepsilon}
\DeclareMathOperator{\e}{e}
\newcommand{\ra}{\rightarrow}
\newcommand{\da}{\downarrow}
\newcommand{\ind}[1]{\mathds{1}_{#1}}
\newcommand{\Let}{\coloneqq}
\newcommand{\teL}{\eqqcolon}
\newcommand{\diff}{\mathrm{d}}
\newcommand{\tr}{^\intercal}
\newcommand{\opt}{^\star}
\newcommand{\sigalg}{\mathcal{F}}
\DeclareMathOperator{\Ap}{AP}
\DeclareMathOperator{\Cp}{CP}
\DeclareMathOperator{\Rp}{RP}
\DeclareMathOperator{\APr}{\widetilde{AP}}
\DeclareMathOperator{\CPr}{\widetilde{CP}}
\newcommand{\M}{\mathcal{M}}
\newcommand{\NN}{\mathcal{N}}
\newcommand{\B}{\mathcal{B}}
\newcommand{\W}{\mathcal{W}}
\newcommand{\Lnorm}{\mathcal{L}_2}
\newcommand{\Hinf}{\mathcal{H}_\infty}
\newcommand{\proj}{\mathds{T}_{\B}}
\newcommand{\inner}[1]{\left \langle #1 \right \rangle}
\DeclareMathOperator{\CP}{CP}
\newcommand{\D}{\mathcal{D}}
\newcommand{\filter}{\mathfrak{F}}
\title{A Tractable Fault Detection and Isolation Approach for Nonlinear Systems with Probabilistic Performance}
\thanks{The authors are with the Automatic Control Laboratory, ETH Z\"urich, 8092 Z\"urich, Switzerland. Emails: \texttt{\{mohajerin,lygeros\}@control.ee.ethz.ch}}
\author{Peyman Mohajerin Esfahani and John Lygeros}
\begin{document}

\begin{abstract}
This article presents a novel perspective along with a scalable methodology to design a fault detection and isolation (FDI) filter for high dimensional nonlinear systems. Previous approaches on FDI problems are either confined to linear systems or they are only applicable to low dimensional dynamics with specific structures. In contrast, shifting attention from the system dynamics to the disturbance inputs, we propose a relaxed design perspective to train a linear residual generator given some statistical information about the disturbance patterns. That is, we propose an optimization-based approach to robustify the filter with respect to finitely many signatures of the nonlinearity. We then invoke recent results in randomized optimization to provide theoretical guarantees for the performance of the proposed filer. Finally, motivated by a cyber-physical attack emanating from the vulnerabilities introduced by the interaction between IT infrastructure and power system, we deploy the developed theoretical results to detect such an intrusion before the functionality of the power system is disrupted.
\end{abstract}
\maketitle

%==========================================================================================
\section{Introduction} \label{FDI:sec:int}
%==========================================================================================
	The task of FDI in control systems involves generating a diagnostic signal sensitive to the occurrence of specific faults. This task is typically accomplished by designing a filter with all available information as inputs (e.g., control signals and given measurements) and a scalar output that implements a non-zero mapping from the fault to the diagnostic signal, which is known as the residual, while decoupling unknown disturbances. The concept of residual plays a central role for the FDI problem which has been extensively studied in the last two decades. 
	
	In the context of linear systems, Beard and Jones \cite{ref:Beard_PhD,ref:Jones_PhD} pioneered an observer-based approach whose intrinsic limitation was later improved by Massoumnia et al. \cite{ref:MassoumniaWillsky}. Following the same principles but from a game theoretic perspective, Speyer and coauthors thoroughly investigated the approach in the presence of noisy measurements \cite{ref:ChungSpeyer_98,ref:DougSpeyer_99}. Nyberg and Frisk extended the class of systems to linear differential-algebraic equation (DAE) apparently subsuming all the previous linear classes \cite{ref:NybergFrisk_TAC06}, which recently also studied in the context of stochastic linear systems \cite{ref:EriFriKry-13}. This extension greatly enhanced the applicability of FDI methods since the DAE models appear in a wide range of applications, including electrical systems, robotic manipulators, and mechanical systems. 
	
	For nonlinear systems, a natural approach is to linearize the model at an operating point, treat the nonlinear higher order terms as disturbances, and decouple their contributions from the residual by employing robust techniques  \cite{ref:SeligerFrank91,ref:HouPatton_96}. This strategy only works well if either the system remains close to the chosen operating point, or the exact decoupling is possible. The former approach is often limited, since in the presence of unknown inputs the system may have a wide dynamic operating range, which in case linearization leads to a large mismatch between linear model and nonlinear behavior. The latter approach was explored in detail by De Persis and Isidori, who in \cite{ref:DePersisIsidori} proposed a differential geometric approach to extend the unobservibility subspaces of \cite[Section IV]{ref:Massoumnia}, and by Chen and Patton, who in \cite[Section 9.2]{ref:Chen} dealt with a particular class of bilinear systems. These methods are, however, practically limited by the need to verify the required conditions on the system dynamics and transfer them into a standard form, which essentially involve solving partial differential equations, restricting the application of the method to relatively low dimensional systems.
	
	Motivated by this shortcoming, in this article we develop a novel approach to FDI which strikes a balance between analytical and computational tractability, and is applicable to high dimensional nonlinear dynamics. For this purpose, we propose a design perspective that basically shifts the emphasis from the system dynamics to the family of disturbances that the system may encounter. We assume that some statistical information of the disturbance patterns is available. Following \cite{ref:NybergFrisk_TAC06} we restrict the FDI filters to a class of linear operators that fully decouple the contribution of the linear part of the dynamics. Thanks to the linearity of the resulting filter, we then trace the contribution of the nonlinear term to the residual, and propose an optimization-based methodology to robustify the filter to the nonlinearity signatures of the dynamics by exploiting the statistical properties of the disturbance signals. The optimization formulation is effectively convex and hence tractable for high dimensional dynamics. Some preliminary results in this direction were reported in \cite{ref:Mohajerin_CDC12}, while an application of our approach in the presence of measurement noise was successfully tested for wind turbines in \cite{ref:SveMohKamLyg-13}.
	
	The performance of the proposed methodology is illustrated in an application to an emerging problem of cyber security in power networks. In modern power systems, the cyber-physical interaction of IT infrastructure (SCADA systems) with physical power systems renders the system vulnerable not only to operational errors but also to malicious external intrusions. As an example of this type of cyber-physical interaction we consider here the Automatic Generation Control (AGC) system, which is one of the few control loops in power networks that are closed over the SCADA system without human operator intervention. In earlier work \cite{ref:Mohajerin_ACC10,ref:Mohajerin_CDC10} we have shown that, having gained access to the AGC signal, an attacker can provoke frequency deviations and power oscillations by applying sophisticated attack signals. The resulting disruption can be serious enough to trigger generator out-of-step protection relays, leading to load shedding and generator tripping. Our earlier work, however, also indicated that an early detection of the intrusion may allow one to disconnect the AGC and limit the damage by relying solely on the so-called primary frequency controllers. In this work we show how to mitigate this cyber-physical security concern by using the proposed FDI scheme to develop a protection layer which quickly detects the abnormal signals generated by the attacker. This approach to enhancing the cyber-security of power transmission systems led to an EU patent sponsored by ETH Zurich \cite{ref:Patent}.
	
	The article is organized as follows. In Section \ref{FDI:sec:prob} a formal description of the FDI problem as well as the outline of the proposed methodology is presented. A general class of nonlinear models is described in Section \ref{FDI:sec:model}. Then, reviewing residual generation for the linear models, we develop an optimization-based framework for nonlinear systems in Section \ref{FDI:sec:FDI}. Theoretical guarantees are also provided in the context of randomized algorithms. We apply the developed methodology to the AGC case study in Section \ref{FDI:sec:sim}, and finally conclude with some remarks and directions for future work in Section \ref{FDI:sec:sum}. For better readability, the technical proofs of Sections \ref{FDI:subsec:nonlinear} and \ref{FDI:subsec:perf} are moved to the appendices.

%===============================================================================
\paragraph{\bf Notation}
%===============================================================================
The symbols $\N$ and $\R_+$ denote the set of natural and nonnegative real numbers, respectively. Let $A \in \R^{n \times m}$ be an $n \times m$ matrix with real values, $A\tr \in \R^{m \times n}$ be its transpose, and $\|A\|_2 \Let \overline \sigma(A)$ where  $\overline{\sigma}$ is the maximum singular value of the matrix.  Given a vector $v \Let [v_1, \cdots, v_n]\tr$, the infinite norm is defined as $\| v \|_\infty \Let \max_{i\le n} |v_i|$. Let $G$ be a linear matrix transfer function. Then $\|G\|_{\Hinf} \Let \sup_{\omega \in \R} \overline{\sigma}\big(G(j\omega)\big)$, where $\overline{\sigma}$ is the maximum singular value of the matrix $G(j\omega)$. The function space $\W^n$ denotes the set of piece-wise continuous (p.w.c) functions taking values in $\R^n$, and $\W^n_T$ is the restriction of $\W^n$ to the time interval $[0,T]$, which is endowed with the $\Lnorm$-inner product, i.e., $\inner{e_1,e_2} \Let \int_0^T e_1\tr(t)e_2(t) \diff t$ with the associated $\Lnorm$-norm $\|e\|_{\Lnorm} \Let \sqrt{\inner{e,e}}$. The linear operator $p:\W^n \ra \W^n$ is the distributional derivative operator. In particular, if $e:\R_+ \ra \R^n$ is a smooth mapping then $p [e(t)] \Let \frac{\diff}{\diff t}e(t)$. Given a  probability space $(\Omega, \sigalg, \PP)$, we denote the $n$-Cartesian product space by $\Omega^n \Let \bigotimes_{i=1}^n \Omega$ and the respective product measure by $\PP^n$.
\section{Problem Statement and Outline of the Proposed Approach} \label{FDI:sec:prob}
%==========================================================================================
In this section, we provide the formal description of the FDI problem as well as our new design perspective. We will also outline our methodology to tackle the proposed perspective.

	%------------------------------------------------------------------------------------------
	\subsection{Formal Description}
	%------------------------------------------------------------------------------------------
	The objective of the FDI design is to use all information to generate a diagnostic signal to alert the operators to the occurrence of a specific fault. Consider a general dynamical system as in Figure \ref{fig:sys} with its inputs categorized into (i) unknown inputs $d$, (ii) fault signal $f$, and (iii) known inputs $u$. The unknown input $d$ represents unknown disturbances that the dynamical system encounters during normal operation. The known input $u$ contains all known signals injected to the system which together with the measurements $y$ are available for FDI tasks. Finally, the input $f$ is a fault (or an intrusion) which cannot be directly measured and represents the signal to be detected.
	
		\begin{figure}[t!]
			\centering
			\includegraphics[scale = 0.4]{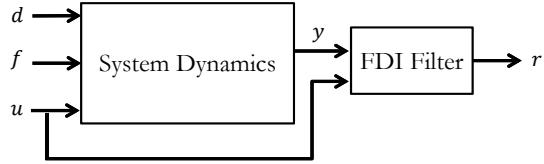}
			\caption{General configuration of the FDI filter}
			\label{fig:sys}
		\end{figure}
	
	The FDI task is to design a filter whose input are the known signals ($u$ and $y$) and whose output (known as the residual and denoted by $r$) differentiates whether the measurements are a consequence of some normal disturbance input $d$, or due to the fault signal $f$. Formally speaking, the residual can be viewed as a function $r(d,f)$, and the FDI design is ideally translated as the mapping requirements
		\begin{subequations}
			\label{mapping}
				\begin{align}
				\label{d-r} d &\mapsto r(d,0)	\equiv 0, \\
				\label{f-r} f &\mapsto r(d,f)	\neq 0, \quad \forall d
				\end{align}
		\end{subequations}
	where condition \eqref{d-r} ensures that the residual of the filter, $r$, is not excited when the system is perturbed by normal disturbances $d$, while condition \eqref{f-r} guarantees the filter sensitivity to the fault $f$ in the presence of any disturbance $d$. 
	
	The state of the art in FDI concentrates on the system dynamics, and imposes restrictions to provide theoretical guarantees for the required mapping conditions \eqref{mapping}. For example, the authors in \cite{ref:NybergFrisk_TAC06} restrict the system to linear dynamics, whereas \cite{ref:Hamouri_99,ref:DePersisIsidori} treat nonlinear systems but impose necessary conditions in terms of a certain distribution connected to their dynamics. In an attempt to relax the perfect decoupling condition, one may consider the worst case scenario of the mapping \eqref{mapping} in a robust formulation as 
		\begin{align} 
		\label{rcp}
			\Rp: \left\{ \begin{array}{lll}
				\min\limits_{\gamma, \filter}		&  \gamma   \\
				\text{s.t. }	& \big \|r(d,0) \big \| \leq \gamma, &\forall d \in \D \vspace{1mm} \\
								& f \mapsto r(d,f) \neq 0,     &\forall d \in \D,
						\end{array} \right.
		\end{align}
	where $\D$ is set of normal disturbances, $\gamma$ is the alarm threshold of the designed filter, and the minimization is running over a given class of FDI filters denoted by $\filter$. Note that the residual $r$ is influenced by the choice of the filter in $\filter$, but we omit this dependence for notational simplicity. In view of formulation \eqref{rcp}, an alarm is only raised whenever the residual exceeds $\gamma$, i.e., the filter avoids any false alarm. This, however, comes at the cost of missed detections of the faults whose residual is not bigger than the threshold $\gamma$. In the literature, the robust perspective $\Rp$ has also been studied in order for a trade-off between disturbance rejection and fault sensitivity for a certain class of dynamics, e.g., see \cite[Section 9.2]{ref:Chen} for bilinear dynamics and \cite{ref:Franze-RobustPoly-12} for multivariate polynomial systems.

	%------------------------------------------------------------------------------------------
	\subsection{New Design Perspective}
	%------------------------------------------------------------------------------------------
	Here we shift our attention from the system dynamics to the class of unknown inputs $\D$. We assume that the disturbance signal $d$ comes from a prescribed probability space and relax the robust formulation $\Rp$ by introducing probabilistic constraints instead. In this view, the performance of the FDI filter is characterized in a probabilistic fashion. 
	
	Assume that the signal $d$ is modeled as a random variable on the prescribed probability space $(\Omega, \sigalg, \PP)$, which takes values in a metric space endowed with the corresponding Borel sigma-algebra. Assume further that the class of FDI filters ensures the measurability of the mapping $d \mapsto r$ where $r$ also belongs to a metric space. In light of this probabilistic framework, one may quantify the filter performance from different perspectives; in the following we propose two of them:
	{ %\small 
		\begin{align} 
		\label{rcp-ccp}
			\Ap: \left\{ \begin{array}{lll}
								\min\limits_{\gamma, \filter}		&  \gamma   \\
								\text{s.t. }	& \EE \big[ J \big( \|r(d,0)\| \big) \big ] \leq \gamma \vspace{1mm} \\
												& f \mapsto r(d,f) \neq 0, \quad \forall d \in \D,
						\end{array} \right.
			%\quad  
			\Cp: \left\{ \begin{array}{lll}
								\min\limits_{\gamma, \filter}		&  \gamma   \\
								\text{s.t. }	& \PP \big( \|r(d,0)\| \le \gamma \big ) \ge 1 - \eps \vspace{1mm} \\ 
												& f \mapsto r(d,f) \neq 0,  \quad \forall d \in \D,
						\end{array} \right.
		\end{align}}
	where $\EE[\cdot]$ in $\Ap$ is meant with respect to the probability measure $\PP$, and $\|\cdot\|$ is the corresponding norm in the $r$ space. The function $J:\R_+ \ra \R_+$ in $\Ap$ and $\eps \in (0,1)$ in $\Cp$ are design parameters. To control the filter residual generated by $d$, the payoff function $J$ is required to be in class $\mathcal{K}_\infty$, i.e., $J$ is strictly increasing and $J(0) = 0$ \cite[Definition 4.2, p.\ 144]{ref:Khalil}. The decision variables in the above optimization programs are $\filter$, a class of FDI filters which is chosen a priori, and $\gamma$ which is the filter threshold; we shall explain these design parameters more explicitly in subsequent sections. 

	Two formulations provide different probabilistic interpretations of fault detection. The program $\Ap$ stands for ``\emph{Average Performance}" and takes all possible disturbances into account, but in accordance with their occurrence probability in an averaging sense. The program $\CP$ stands for ``\emph{Chance Performance}" and ignores an $\eps$-fraction of the disturbance patterns and only aims to optimize the performance over the rest of the disturbance space. Note that in the $\CP$ perspective, the parameter $\eps$ is an additional design parameter to be chosen a priori.
	
	Let us highlight that the proposed perspectives rely on the probability distribution $\PP$, which requires prior information about possible disturbance patterns. That is, unlike the existing literature, the proposed design prioritizes between disturbance patterns in terms of their occurrence likelihood. From a practical point of view this requirement may be natural; in Section \ref{FDI:sec:sim} we will describe an application of this nature. 

	%------------------------------------------------------------------------------------------
	\subsection{Outline of the Proposed Methodology}
	%------------------------------------------------------------------------------------------
	We employ randomized algorithms to tackle the formulations in \eqref{rcp-ccp}. We generate $n$ independent and identically distributed (i.i.d.) scenarios $(d_i)_{i=1}^n$ from the probability space $(\Omega, \sigalg, \PP)$, and consider the following optimization problems as random counterparts of those in \eqref{rcp-ccp}:
		{ %\small
		\begin{align} 
		\label{prob-rand}
			\APr: \left\{ \begin{array}{lll}
								\min\limits_{\gamma, \filter}		&  \gamma   \\
								\text{s.t. }	& \frac{1}{n}\sum_{i = 1}^n J\big(\| r(d_i,0) \|\big)  \leq \gamma \vspace{1mm} \\
												& f \mapsto r(d,f) \neq 0, \quad \forall d \in \D
						\end{array} \right.
			\quad 
			\CPr: \left\{ \begin{array}{lll}
								\min\limits_{\gamma, \filter}		&  \gamma   \\
								\text{s.t. }	&\max\limits_{i\le n}\| r(d_i,0) \|  \le \gamma \vspace{1mm} \\ 
												& f \mapsto r(d,f) \neq 0,  \quad \forall d \in \D,
						\end{array} \right.
		\end{align}}
	Notice that the optimization problems $\APr$ and $\CPr$ are naturally stochastic as they depend on the generated scenarios $(d_i)_{i=1}^n$, which is indeed a random variable defined on $n$-fold product probability space $(\Omega^n, \sigalg^n, \PP^n)$. Therefore, their solutions are also random variables. In this work, we first restrict the FDI filters to a class of linear operators in which the random programs \eqref{prob-rand} are effectively convex, and hence tractable. In this step, the FDI filter is essentially robustified to $n$ signatures of the dynamic nonlinearity. Subsequently, invoking existing results on randomized optimization, in particular \cite{ref:Hansen-2012,ref:MohSut-13}, we will provide probabilistic guarantees on the relation of programs \eqref{rcp-ccp} and their probabilistic counterparts in \eqref{prob-rand}, whose precision is characterized in terms of the number of scenarios $n$. 
	
	We should highlight that the true empirical approximation of the chance constraint in $\CP$ is indeed ${1 \over n}\sum_{i=1}^{n} \ind{\big\{||r(d_i,0)|| \leq \gamma \big\}} \geq 1- \eps$, where $\ind{}$ is the indicator function. This approximation, as opposed to the one proposed in \eqref{prob-rand}, leads to a non-convex optimization program which is, in general, computationally intractable. In addition, note that the design parameter $\eps$ of $\Cp$ in \eqref{rcp-ccp} does not explicitly appear in the random counterpart $\CPr$ in \eqref{prob-rand}. However, as we will clarify in \ref{FDI:subsec:perf}, the parameter $\eps$ contributes to the probabilistic guarantees of the design. 

%==========================================================================================
\section{Model Description and Basic Definitions} \label{FDI:sec:model}
%==========================================================================================
	In this section we introduce a class of nonlinear models along with some basic definitions, which will be considered as the system dynamics in Figure \ref{fig:sys} throughout the article. Consider the nonlinear differential-algebraic equation (DAE) model 
	\begin{align}\label{model}
		E(x) + H(p)x + L(p)z + F(p) f = 0,
	\end{align}
	where the signals $x, z, f$ are assumed to be piece-wise continuous (p.w.c.) functions from $\R_{+}$ into $\R^{n_x}, \R^{n_z}, \R^{n_f}$, respectively; we denote the spaces of such signals by $\W^{n_x}, \W^{n_z}, \W^{n_f}$, respectively. Let $n_r$ be the number of rows in \eqref{model}, and $E:\R^{n_x} \ra \R^{n_r}$ be a Lipschitz continuous mapping. The operator $p$ is the distributional derivative operator \cite[Section I]{ref:Adams_Sobolev}, and $H, L, F$ are polynomial matrices in the operator $p$ with $n_r$ rows and $n_x, n_z, n_f$ columns, respectively. In the setup of Figure \ref{fig:sys}, the signal $x$ represents all unknowns signals, e.g., internal states of the system dynamics and unknown disturbances $d$. The signal $z$ contains all known signals, i.e., it is an augmented signal including control input $u$ and available measurements $y$. The signal $f$ stands for faults or intrusion which is the target of detection. We refer to \cite{ref:Shcheglova-DAE} and the references therein for general theory of nonlinear DAE systems and the regularity of their solutions. 
	
	One may extend the space of functions $x, z, f$ to Sobolev spaces, but this is outside the scope of our study. On the other hand, if these spaces are restricted to the (resp.\ right) smooth functions, then the operator $p$ can be understood as the classical (resp.\ right) differentiation operator. Throughout this article we will focus on continuous-time models, but one can obtain similar results for discrete-time models by changing the operator $p$ to the time-shift operator. We will think of the matrices $H(p)$, $L(p)$ and $F(p)$ above either as linear operators on the function spaces (in which case $p$ will be interpreted as a generalized derivative operator as explained above) or as algebraic objects (in which case $p$ will be interpreted as simply a complex variable). The reader is asked to excuse this slight abuse of the notation, but the interpretation should be clear from the context.
	
	Let us first show the generality of the DAE framework of \eqref{model} by the following example. Consider the classical nonlinear ordinary differential  equation 
	\begin{align}\label{ode}
	    \begin{cases}
	    G\dot{X}(t) &= E_X\big(X(t),d(t)\big) + AX(t) + B_u u(t) + B_d d(t) + B_f f(t)\\
	    Y(t) &= E_Y\big(X(t),d(t)\big) + CX(t) + D_u u(t) + D_d d(t) + D_f f(t)
	    \end{cases}
	\end{align}
	where $u(\cdot)$ is the input signal, $d(\cdot)$ the unknown disturbance, $Y(\cdot)$ the measured output, $X(\cdot)$ the internal variables, and $f(\cdot)$ a faults (or an attack) signal to be detected. Parameters $G$, $A$, $B_u$, $B_d$, $B_f$, $D_u$, $D_d$, and $D_f$ are constant matrices and functions $E_X, E_Y$ are Lipschitz continuous mappings with appropriate dimensions. One can easily fit the model  \eqref{ode} into the DAE framework of \eqref{model} by defining
	\begin{align*}
	     x &\Let \begin{bmatrix}
	                  X \\
	                  d \\
	               \end{bmatrix},& %\quad
	     z & \Let \begin{bmatrix}
	                  Y \\
	                  u \\
	               \end{bmatrix},& \\
	    E(x) \Let \begin{bmatrix}
				    E_X(x)\\
				    E_Y(x)
				 \end{bmatrix},   \quad  
	     H(p) & \Let \begin{bmatrix}
	                  -pG + A & B_d \\
	                  C & D_d \\
	               \end{bmatrix},& % \quad
	     L(p) &\Let \begin{bmatrix}
	                  0 & B_u \\
	                  -I & D_u \\
	               \end{bmatrix},& %\quad 
	    F(p) \Let \begin{bmatrix}
	                  B_f \\
	                  D_f \\
	               \end{bmatrix}.
	\end{align*}
	Following \cite{ref:NybergFrisk_TAC06}, with a slight extension to a nonlinear dynamics, let us formally characterize all possible observations of the model \eqref{model} in the absence of the fault signal $f$:
		\begin{equation}\label{M}
		    \M \Let \big\{ z \in \W^{n_z} \big | ~ \exists x \in \W^{n_x} : \quad E(x) + H(p)x + L(p)z = 0\big \};
		\end{equation}
		This set is known as the \emph{behavior} of the system \cite{ref:Polderman_BehavioralApproach98}. 
		
		\begin{Def}[Residual Generator]\label{def:residual}
		    A proper linear time invariant filter $r \Let R(p)z $ is a residual generator for \eqref{model} if for all $z \in \M$, it holds that $\lim\limits_{t \ra \infty} r(t) = 0$.
		\end{Def}
		Note that by Definition \ref{def:residual} the class of residual generators in this study is restricted to a class of \emph{linear} transfer functions where $R(p)$ is a matrix of proper rational functions of $p$. 
		\begin{Def}[Fault Sensitivity]\label{def:sensitivity}
		    The residual generator introduced in Definition \ref{def:residual} is sensitive to fault $f_i$ if the transfer function from $f_i$ to $r$ is nonzero, where $f_i$ is the $i^{th}$ elements of the signal $f$.
		\end{Def}
		One can inspect that Definition \ref{def:residual} and Definition \ref{def:sensitivity} essentially encode the basic mapping requirements \eqref{d-r} and \eqref{f-r}, respectively.

%==========================================================================================
\section{Fault Detection and Isolation Filters} \label{FDI:sec:FDI}
%==========================================================================================
	The main objective of this section is to establish a scalable framework geared towards the design perspectives $\Ap$ and $\Cp$ as explained in Section \ref{FDI:sec:prob}. To this end, we first review a polynomial characterization of the residual generators and its linear program formulation counterpart for linear systems (i.e., the case where $E(x) \equiv 0$). We then extend the approach to the nonlinear model \eqref{model} to account for the contribution of $E(\cdot)$ to the residual, and subsequently provide probabilistic performance guarantees for the resulting filter. 
	
	%------------------------------------------------------------------------------------------
	\subsection{Residual Generators for Linear Systems} \label{FDI:subsec:linear}
	%------------------------------------------------------------------------------------------
	In this subsection we assume $E(x) \equiv 0$, i.e., we restrict our attention to the class of linear DAEs.  One can observe that the behavior set $\M$ can alternatively be defined as
		\begin{equation*}
		    \M = \big\{ z \in \W^{n_z} \big |~ N_{H}(p)L(p)z = 0\big \},
		\end{equation*} 
	where the collection of the rows of $N_H(p)$ forms an irreducible polynomial basis for the left null-space of the matrix $H(p)$ \cite[Section 2.5.2]{ref:Polderman_BehavioralApproach98}. This representation allows one to describe the residual generators in terms of polynomial matrix equations. That is, by picking a linear combination of the rows of $N_H(p)$ and considering an arbitrary polynomial $a(p)$ of sufficiently high order with roots with negative real parts, we arrive at a residual generator in the sense of Definition \ref{def:residual} with transfer operator
		\begin{equation}\label{filter}
		    R(p) = a^{-1}(p) \gamma(p) N_H(p) L(p)  \Let  a^{-1}(p) N(p) L(p),
		\end{equation}
	where $\gamma(p)$ is a polynomial row vector representing a linear combination of the rows of $N_H(p)$. Note that the role of $\gamma(p)$ is implicitly taken into consideration by $N(p) \Let \gamma(p) N_H(p)$. The above filter can easily be realized by an explicit state-space description with input $z$ and output $r$. Multiplying the left hand-side of \eqref{model} by $a^{-1}(p)N(p)$ leads to 
			\begin{equation*}
				r = - a^{-1}(p)N(p)F(p)f.
			\end{equation*}
	Thus, a sensitive residual generator, in the sense of Definition \ref{def:residual} and Definition \ref{def:sensitivity}, is characterized by the polynomial matrix equations
		\begin{subequations}
		\label{poly}
			\begin{align}
			\label{poly1}  N(p) H(p) &= 0,\\
			\label{poly2}  N(p) F(p) &\neq 0,
			\end{align}
		\end{subequations}
	where \eqref{poly1} implements condition \eqref{d-r} above (cf. Definition \ref{def:residual}) while \eqref{poly2} implements condition \eqref{f-r} (cf. Definition \ref{def:sensitivity}). Both row polynomial vector $N(p)$ and denominator polynomial $a(p)$ can be viewed as design parameters. Throughout this study we, however, fix $a(p)$ and aim to find an optimal $N(p)$ with respect to a certain objective criterion related to the filter performance. 
	
	In case there are more than one faults ($n_f > 1$), it might be of interest to isolate the impact of one fault in the residual from the others. The following remark implies that the isolation problem is effectively a detection problem.
		
	\begin{Rem}[Fault Isolation]
	\label{rem:isol}
		Consider model \eqref{model} and suppose $n_f >1$. In order to detect only one of the fault signals, say $f_1$, and isolate it from the other faults, $f_i, i \in \{2,\cdots, n_f\}$, one may consider the detection problem for the same model but in new representation 
			\begin{align*}
				E(x) + [H(p) ~ \widetilde{F}(p)] \begin{bmatrix} x \\ \tilde{f} \end{bmatrix} + L(p)z + F_1(p) f = 0,
			\end{align*}
		where $F_1(p)$ is the first column of $F(p)$, and  $\widetilde{F}(p)\Let[F_2(p),\cdots,F_{n_f}(p)]$, and $\tilde{f}\Let[f_2,\cdots,f_{n_f}]$.
	\end{Rem}
	
	In light of Remark \ref{rem:isol}, one can build a bank of filters where each filter aims to detect a particular fault while isolating the impact of the others; see \cite[Theorem 2]{ref:Frisk_AUT09} for more details on fault isolation. Next, we show how to transform the matrix polynomial equations \eqref{poly} into a linear programming framework.
	
	\begin{Lem} %[Linear Programming Characterization]
	\label{lem:feas}
		Let $N(p)$ be a feasible polynomial matrix of degree $d_N$ for the inequalities \eqref{poly}, where
	\begin{align*}
		H(p) \Let \sum^{d_H}_{i=0} H_i p^i, \quad F(p) \Let \sum^{d_F}_{i=0} F_i p^i, \quad
		N(p) \Let \sum^{d_N}_{i=0} N_i p^i,
	\end{align*}
	and $H_i \in \R^{n_r \times n_x}$, $F_i \in \R^{n_r \times n_f}$, and $N_i \in \R^{1 \times n_r}$ are constant matrices. Then, the polynomial matrix inequalities \eqref{poly} are equivalent, up to a scalar, to
	\begin{subequations}
	\label{LP}
	    \begin{align}
	        \label{LP1} &\bar{N}\bar{H} = 0, \\
	        \label{LP2} &\big \| \bar{N}\bar{F} \big\|_{\infty}	 \ge 1,
	    \end{align}
	\end{subequations}
	where $\|\cdot\|_\infty$ is the infinity vector norm, and
	\begin{align*}
	    \bar{N} \Let
	    \begin{bmatrix}
	            N_0 & N_1 & \cdots & N_{d_N} \\
	    \end{bmatrix}
	\end{align*}
	\begin{align*}
	    \bar H \Let
	    \begin{bmatrix}
	          H_0 & H_1 & \cdots & H_{d_H} & 0 & \cdots & 0 \\
	          0 & H_0 & H_1 & \cdots & H_{d_H} & 0 & \vdots \\
	          \vdots &  & \ddots & \ddots &  & \ddots & 0 \\
	          0 & \cdots & 0 & H_0 & H_1 & \cdots & H_{d_H} \\
	    \end{bmatrix},
	\end{align*}
	\begin{align*}
	    \bar F \Let
	    \begin{bmatrix}
	          F_0 & F_1 & \cdots & F_{d_F} & 0 & \cdots & 0 \\
	          0 & F_0 & F_1 & \cdots & F_{d_F} & 0 & \vdots \\
	          \vdots &  & \ddots & \ddots &  & \ddots & 0 \\
	          0 & \cdots & 0 & F_0 & F_1 & \cdots & F_{d_F} \\
	    \end{bmatrix}.
	\end{align*}
	\end{Lem}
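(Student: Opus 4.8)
The plan is to recognize that both relations in \eqref{poly} are purely statements about the coefficients of polynomial matrices, and that $\bar H$ and $\bar F$ are exactly the (block) Toeplitz matrices that encode polynomial multiplication as discrete convolution of coefficient sequences. The whole lemma then reduces to matching coefficients and to a homogeneity (scaling) argument for the normalization in \eqref{LP2}.

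First I would write the products explicitly. Since $N(p) = \sum_{i=0}^{d_N} N_i p^i$ and $H(p) = \sum_{j=0}^{d_H} H_j p^j$, multiplication gives $N(p)H(p) = \sum_{k=0}^{d_N+d_H}\big(\sum_{i+j=k} N_i H_j\big)p^k$, the inner sum running over $0\le i\le d_N$, $0\le j\le d_H$. A polynomial matrix vanishes identically if and only if the matrix attached to every power $p^k$ vanishes, so \eqref{poly1} is equivalent to the finite system $\sum_{i+j=k} N_i H_j = 0$ for all $k=0,\dots,d_N+d_H$.

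Second I would verify that stacking these coefficient equations is precisely $\bar N\bar H = 0$. By construction the $k$-th block column of $\bar H$ places the block $H_{k-i}$ in the row indexed by $i$ (with the convention $H_m = 0$ for $m<0$ or $m>d_H$), so the $k$-th block of $\bar N \bar H$ equals $\sum_i N_i H_{k-i} = \sum_{i+j=k} N_i H_j$; the shifted, banded placement of the $H_j$ in successive rows of $\bar H$ is exactly the Toeplitz pattern realizing this convolution, and the number of block rows $d_N+1$ matches the $d_N+1$ coefficient blocks in $\bar N$. This yields \eqref{poly1} $\Leftrightarrow$ \eqref{LP1}. The identical computation with $\bar F$ gives $N(p)F(p)=0 \Leftrightarrow \bar N\bar F = 0$, hence $N(p)F(p)\neq 0 \Leftrightarrow \bar N\bar F \neq 0 \Leftrightarrow \|\bar N\bar F\|_\infty > 0$.

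Finally, to pass from the strict condition $\|\bar N\bar F\|_\infty>0$ to the normalized inequality \eqref{LP2}, I would exploit homogeneity: the constraints $N(p)H(p)=0$ and $N(p)F(p)\neq 0$ are invariant under $N(p)\mapsto c\,N(p)$ for any nonzero scalar $c$, while $\bar N\bar H=0$ is invariant under $\bar N\mapsto c\bar N$ and $\|\bar N\bar F\|_\infty$ scales by $|c|$. Thus any $N$ feasible for \eqref{poly} can be rescaled so that $\|\bar N\bar F\|_\infty = 1\ge 1$, and conversely $\|\bar N\bar F\|_\infty\ge 1>0$ forces $\bar N\bar F\neq 0$ and hence $N(p)F(p)\neq 0$; this is the meaning of the phrase ``equivalent up to a scalar.'' The only delicate point is the index bookkeeping in the second step, namely confirming that the banded placement of blocks in $\bar H$ and $\bar F$ reproduces every convolution coefficient for $k=0,\dots,d_N+d_H$ (resp.\ $d_N+d_F$) with the correct zero-padding at the boundaries and that the block dimensions are consistent; this is routine but must be checked so that no coefficient equation is dropped or double-counted.
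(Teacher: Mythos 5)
Your proof is correct and follows essentially the same route as the paper: the paper's argument is exactly the identity $N(p)H(p) = \bar N\bar H\,[I\ pI\ \cdots\ p^{d_N+d_H}I]\tr$ (your coefficient-convolution/Toeplitz observation) together with the same homogeneity-based rescaling of \eqref{poly2}. You merely spell out the index bookkeeping that the paper dismisses as ``easy to observe.''
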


	\begin{proof}
	    It is easy to observe that
	    \begin{align*}
	        N(p)H(p) &= \bar{N}\bar{H} [I~pI~\cdots~p^{i}I]\tr, \qquad i \Let d_N + d_H, \\
	        N(p)F(p) &= \bar{N}\bar{F} [I~pI~\cdots~p^{j}I]\tr, \qquad j \Let d_N + d_F.
	    \end{align*}
	    Moreover, in light of the linear structure of equations \eqref{poly}, one can simply scale the inequality \eqref{poly2} and arrive at the assertion of the lemma.
	\end{proof}
	
%	\begin{Rem}\label{rem:inf norm}
%		Strictly speaking, the formulation in Lemma \ref{lem:feas} is not a linear program, due to the non-convex constraint \eqref{LP2}. It is, however, easy to see that if $\bar N$ is a solution to \eqref{LP}, then so is $-\bar N$. Hence, the inequality \eqref{LP2} can be understood as a family of $m$ linear programs where $m = n_f(d_F + d_N +1)$ is the number of columns of $\bar{F}$, and $n_f$ is the dimension of signal $f$ in \eqref{model}. Each of these linear programs focuses on a component of the vector $\bar N \bar F$, replacing the inequality \eqref{LP2} with
%	    \begin{align*}
%	        & \bar{N}\bar{F} v \ge 1, \qquad v \Let [0,\cdots, 1, \cdots, 0]\tr.
%	    \end{align*}
%	\end{Rem}
	
	Strictly speaking, the formulation in Lemma \ref{lem:feas} is not a linear program, due to the non-convex constraint \eqref{LP2}. It is, however, easy to show that the characterization \eqref{LP} can be understood as a number of linear programs, which grows linearly in the degree of the filter:
	
	\begin{Lem}\label{lem:inf norm}
		Consider the sets
		\begin{align*}
			\NN_j \Let \Big\{ \bar N \in \R^{n_r(d_N+1)}  ~ \big | ~ \bar N  \bar H = 0, ~ \bar N \bar F v_j \ge 1\Big \}, 
			\qquad v_j \Let \overset{ \quad \da ~j^{\text{th}} }{\big[0,\cdots, {1} , \cdots, 0 \big]\tr}, 
		\end{align*}
		and let $\NN \Let \bigcup_{j = 1}^m \NN_j$ where $m \Let n_f(d_F + d_N +1)$ is the number of columns of $\bar{F}$ (the parameters $\bar H, \bar F, n_f, d_F, d_N$ are as considered in Lemma \ref{lem:feas}). Then, the set characterized by \eqref{LP} is equivalent to $\NN \cup -\NN$. 
	\end{Lem}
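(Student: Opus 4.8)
The plan is to show the two sets coincide by unpacking the infinity-norm constraint coordinatewise and then exploiting the homogeneity of the linear equation $\bar N \bar H = 0$. First I would observe that $\bar N$ is a single row vector, so $\bar N \bar F$ is again a row vector with exactly $m$ entries, and its $j$-th coordinate is precisely $\bar N \bar F v_j$. Consequently the constraint $\|\bar N \bar F\|_\infty \ge 1$ in \eqref{LP2} is, by the very definition of the infinity norm as a maximum over coordinates, equivalent to the existence of some $j \le m$ with $|\bar N \bar F v_j| \ge 1$. Writing $S$ for the set described by \eqref{LP}, this already gives
\[
S = \big\{ \bar N \in \R^{n_r(d_N+1)} ~\big|~ \bar N \bar H = 0, ~ \exists\, j \le m : |\bar N \bar F v_j| \ge 1 \big\}.
\]

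Next I would split the absolute value into its two sign cases. For a fixed $j$, the condition $\bar N \bar F v_j \ge 1$ is precisely membership in $\NN_j$, whereas $\bar N \bar F v_j \le -1$ is equivalent, after the substitution $\bar N \mapsto -\bar N$, to membership in $\NN_j$ as well; here the key point is that the equality constraint $\bar N \bar H = 0$ is homogeneous and therefore invariant under negation, so $-\bar N$ satisfies $(-\bar N)\bar H = 0$ if and only if $\bar N$ does. Hence the set $\{\bar N : \bar N \bar H = 0,\ \bar N \bar F v_j \le -1\}$ is exactly $-\NN_j$, and the coordinatewise condition $|\bar N \bar F v_j| \ge 1$ together with $\bar N \bar H = 0$ carves out precisely $\NN_j \cup (-\NN_j)$.

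Finally I would take the union over $j \le m$. Since $S$ requires the infinity-norm condition to hold for at least one coordinate, intersecting with $\bar N \bar H = 0$ and collecting the two signs yields
\[
S = \bigcup_{j=1}^m \big( \NN_j \cup (-\NN_j) \big) = \Big( \bigcup_{j=1}^m \NN_j \Big) \cup \Big( -\bigcup_{j=1}^m \NN_j \Big) = \NN \cup (-\NN),
\]
which is the claimed equivalence. I do not expect a genuine obstacle here: the whole argument rests on the elementary characterization of the infinity norm as a maximum over coordinates and on the homogeneity of the linear constraint, which lets the negative-sign case be folded into $-\NN$. The only point that deserves care is the bookkeeping that $\bar N$ is a single row (so $\bar N \bar F$ is a vector, not a matrix) and that $m$ is exactly the number of columns of $\bar F$, so that the basis vectors $v_j$ indeed range over all coordinates of $\bar N \bar F$.
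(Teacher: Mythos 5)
Your proposal is correct and follows essentially the same route as the paper's own proof: both rest on the observation that $\|\bar N \bar F\|_\infty \ge 1$ holds if and only if some coordinate $\bar N \bar F v_j$ is $\ge 1$ or $\le -1$, with the negative case absorbed into $-\NN_j$ via the homogeneity of $\bar N \bar H = 0$. Your version simply writes out the bookkeeping that the paper leaves implicit.
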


	\begin{proof}
		Notice that $\| \bar N \bar F \|_\infty \ge 1$ if and only if there exists a coordinate $j$ such that $\bar N \bar F v_j \ge 1$ or $\bar N \bar F v_j \le -1$. Thus, the proof readily follows from the fact that each of the set $\NN_j$ focuses on a component of the vector $\bar N \bar F$ in \eqref{LP2}. 
	\end{proof}

	\begin{Fact}\label{fact:rank}
	    There exists a solution $N(p)$ to \eqref{poly} if and only if $\text{Rank}~[H(p) ~ F(p)] > \text{Rank}~H(p)$.
	\end{Fact}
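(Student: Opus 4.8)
The plan is to read both conditions in \eqref{poly} through the lens of linear algebra over the field $\R(p)$ of rational functions in $p$, where ``Rank'' denotes the \emph{normal rank}, i.e.\ the rank of the polynomial matrix viewed as a matrix with entries in $\R(p)$. The first observation I would record is that the two polynomial constraints $N(p)H(p)=0$ and $N(p)F(p)\neq 0$ are exactly the statement that the row vector $N(p)$ lies in the left kernel of $H(p)$ over $\R(p)$ but not in the left kernel of the augmented matrix $[H(p)~F(p)]$. Indeed, $N(p)[H(p)~F(p)]=0$ is equivalent to $N(p)H(p)=0$ together with $N(p)F(p)=0$, so the left kernel of $[H(p)~F(p)]$ is a linear subspace of the left kernel of $H(p)$, and a solution of \eqref{poly} is precisely an element of the set-theoretic difference of these two kernels.

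With this reformulation the equivalence becomes a dimension count. Next I would invoke the rank--nullity theorem over $\R(p)$: the left kernel of $H(p)$ has dimension $n_r - \text{Rank}\,H(p)$, while the left kernel of $[H(p)~F(p)]$ has dimension $n_r - \text{Rank}\,[H(p)~F(p)]$. Since the latter kernel is contained in the former, their difference is nonempty if and only if the containment is strict, which in turn holds if and only if the two dimensions differ, i.e.\ if and only if $\text{Rank}\,[H(p)~F(p)] > \text{Rank}\,H(p)$. This single dichotomy handles both implications of the Fact simultaneously and is the heart of the argument.

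The one technical point I would address carefully is the passage between rational and polynomial solutions, since $N(p)$ is required to be a \emph{polynomial} row vector. For the ``if'' direction the dimension count produces a vector with entries in $\R(p)$ satisfying $N(p)H(p)=0$ and $N(p)F(p)\neq 0$; multiplying it by the least common denominator of its entries yields a polynomial vector with the identical two properties, because scaling by a nonzero element of $\R(p)$ preserves both a vanishing and a nonvanishing product. For the ``only if'' direction any polynomial solution is a fortiori a rational one, so no conversion is needed. I expect this clearing-of-denominators step, together with the explicit convention that ``Rank'' must be read as the normal rank over $\R(p)$ rather than a pointwise rank at a fixed complex value of $p$, to be the only places where care is required; the remainder is the standard subspace-inclusion dichotomy recalled above.
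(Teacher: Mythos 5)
Your argument is correct, and it is essentially the intended one: the paper itself omits the proof, deferring to an adaptation of \cite[Corollary 3]{ref:Frisk_AUT09}, and that reference's argument is precisely the normal-rank/left-kernel dimension count over $\R(p)$ that you give, including the observation that clearing denominators converts a rational solution into a polynomial one without affecting either constraint in \eqref{poly}. Your explicit remark that ``Rank'' must be read as normal rank over $\R(p)$ is a worthwhile clarification that the paper leaves implicit.
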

	
	Fact \ref{fact:rank} provides necessary and sufficient conditions for the feasibility of the linear program formulation in Lemma \ref{lem:feas}; proof is omitted as it is an easy adaptation of the one in \cite[Corollary 3]{ref:Frisk_AUT09}.
	
	%------------------------------------------------------------------------------------------
	\subsection{Extension to Nonlinear Systems} \label{FDI:subsec:nonlinear}
	%------------------------------------------------------------------------------------------
	In the presence of nonlinear terms $E(x) \neq 0$, it is straightforward to observe that the residual of filter \eqref{filter} consists of two terms:
		\begin{align}\label{non-res}
			r \Let  R(p)z = - \underbrace{a^{-1}(p)N(p)F(p)f}_{(i)} - \underbrace{a^{-1}(p)N(p)E(x)}_{(ii)}.
		\end{align}
	Term (i) is the desired contribution of the fault $f$ and is in common with the linear setup. Term (ii) is due to the nonlinear term $E(\cdot)$ in \eqref{model}. Our aim here will be to reduce the impact of $E(x)$ while increasing the sensitivity to the fault $f$. To achieve this objective, we develop two approaches to control each of the two terms separately; in both cases we assume that the degree of the filter (i.e., $d_N$ in Lemma \ref{lem:feas}) and the denominator (i.e., $a(p)$ in \eqref{non-res}) are fixed, and the aim is to design the numerator coefficients (i.e., $N(p)$ in \eqref{non-res}).
		
	\subsubsection*{Approach (I) (Fault Sensitivity)}
	
	To focus on fault sensitivity while neglecting the contribution of the nonlinear term, we assume that the system operates close to an equilibrium point $x_e \in \R^{n_x}$. Even though in case of a fault the system may eventually deviate substantially from its nominal operating point, if the FDI filter succeeds in identifying the fault early the system will not have time to deviate too far. Hence, one may hope that a filter based on linearizing the system dynamics around the equilibrium would suffice. Then we assume, without loss of generality, that
	\begin{align*}
		\lim_{x \ra x_e} \frac{\big\|E(x)\big\|_2}{\|x-x_e\|_2} = 0, %\qquad \forall p \ge 1
	\end{align*}
	where $\|\cdot\|_2$ stands for the Euclidean norm of a vector. If this is not the case, the linear part of $E(\cdot)$ can be extracted and included in the linear part of the system.
	
	To increase the sensitivity of the linear filter to the fault $f$, we revisit the linear programming formulation \eqref{LP} and seek a feasible numerator $N(p)$ such that the coefficients of the transfer function $N(p)F(p)$ attain maximum values within the admissible range. This gives rise to the following optimization problem: 
		\begin{align} 
		\label{LP-opt}
		\left\{ \begin{array}{lll}
				\max\limits_{\bar N}		&  \big \|\bar{N}\bar{F} \big\|_{\infty}   \\
				\text{s.t. }	& \bar{N}\bar{H} &= 0 \vspace{1mm} \\
								& \big \| \bar{N} \big\|_{\infty} &\le 1
						\end{array} \right.
		\end{align}
	where the objective function targets the contribution of the signal $f$ to the residual $r$. Let us recall that $\bar{N}\bar{F}$ is the vector containing all numerator coefficients of the transfer function $f \mapsto r$. The second constraint in \eqref{LP-opt} is added to ensure that the solutions remain bounded; note that thanks to the linearity of the filter this constraint does not influence the performance. Though strictly speaking \eqref{LP-opt} is not a linear program, in a similar fashion as in Lemma \ref{lem:inf norm} it is easy to transform it to a family of $m$ different linear programs, where $m$ is the number of columns of $\bar F $. 
	
	%The main problem with this approach is the lack of prior information when the linearization technique is ``precise enough". That is, 
	How well the filter designed by \eqref{LP-opt} will work depends on the magnitude of the second term in \eqref{non-res}, which is due to the nonlinearities $E(x)$ and is ignored in \eqref{LP-opt}. If the term generated by $E(x)$ is large enough, the filter may lead to false alarms, whereas if we set our thresholds high to tolerate the disturbance generated by $E(x)$ in nominal conditions, the filter may lead to missed detections. A direct way toward controlling this trade-off involving the nonlinear term will be the focus of the second approach.  

	\subsubsection*{Approach (II) (Robustify to Nonlinearity Signatures)}
		
	This approach is the main step toward the theoretical contribution of the article, and provides the principle ingredients to tackle the proposed perspectives $\Ap$ and $\CP$ introduced in \eqref{rcp-ccp}. The focus is on term (ii) of the residual \eqref{non-res}, in relation to the mapping \eqref{d-r}. The idea is to robustify the filter against certain signatures of the nonlinearity during nominal operation. In the following we restrict the class of filters to the feasible solutions of polynomial matrix equations \eqref{poly}, characterized in Lemma \ref{lem:feas}.  
	
	Let us denote the space of all p.w.c. functions from the interval $[0,T]$ to $\R^{n}$ by $\W^n_T$. We equip this space with the $\Lnorm$-inner product and the corresponding norm %$ \| e \|^2_{\Lnorm} \Let \inner{e,e} = \int_0^T e\tr(t)e(t) \diff t$.	
		\begin{align*}
		%\label{<>}
		 \| e \|_{\Lnorm} \Let \sqrt{\inner{e,e}}, \qquad \qquad \inner{e,g} \Let  \int_0^T e\tr(t)g(t) \diff t, \quad e, g \in \W^n_T. 
		\end{align*}
	Consider an unknown signal $x \in \W^{n_x}_T$. In the context of the ODEs \eqref{ode} that means we excite the system with the disturbance $d(\cdot)$ for the time horizon $T$. We then stack $d(\cdot)$ together with the internal state $X(\cdot)$ to introduce $x \Let [\begin{smallmatrix} X \\ d \end{smallmatrix}]$. We define the signals $e_x \in \W^{n_r}_T$ and $ r_x \in \W^1_T$ as follows:
		\begin{align}
		\label{r_x}
			e_x(t) \Let E\big(x(t)\big), \qquad 
			r_x(t) \Let -a^{-1}(p)N(p)[e_x](t) , \qquad \forall t \in [0,T].
		\end{align}
	The signal $e_x$ is the ``\emph{nonlinearity signature}" in the presence of the unknown signal $x$, and the signal $r_x$ is the contribution of the nonlinear term to the residual of the linear filter. Our goal now is to minimize  $\|r_x\|_{\Lnorm}$ in an optimization framework in which the coefficients of polynomial $N(p)$ are the decision variables and the denominator $a(p)$ is a fixed stable polynomial with the degree at least the same as $N(p)$. 
	
	\begin{Lem}
	\label{lem:psi}
		Let $N(p)$ be a polynomial row vector of dimension $n_r$ and degree $d_N$, and $a(p)$ be a stable scalar polynomial with the degree at least $d_N$. For any $x \in \W^{n_x}_T$ there exists $\psi_x \in \W_T^{n_r(d_N+1)}$ such that
		%Consider an unknown signal $x \in \W^{n_x}_T$ and the corresponding residual $r_x$ as defined in \eqref{r_x}. Then, there exists a mapping $\psi_x \in \W_T^{n_r(d_N+1)}$ independent of $N(p)$ such that $r_x = \bar N \psi_x$,
%			\begin{enumerate}[label=(\roman*), leftmargin = *, itemsep=2mm, parsep=0mm]
%				\item $r_x(t) = \bar N \psi_x(t), \qquad \forall t \in [0,T]$,
%				\item $\| \psi_x \|_{\Lnorm} \le C \|x\|_{\Lnorm}$,
%			\end{enumerate}			
		\begin{subequations}
		\label{psi}
			\begin{align}
				\label{psi-rx} r_x(t) &= \bar N \psi_x(t),& & \forall t \in [0,T]& \\
				\label{psi-norm} \| \psi_x \|_{\Lnorm} &\le C \|e_x\|_{\Lnorm},& & C \Let \sqrt{n_r(d_N + 1)}\|a^{-1}\|_{\Hinf},&
				%\max \big\{ \|a^{-1}\|_{\Hinf}, 1 \big\},
			\end{align}					
		\end{subequations}	 
%		\begin{align*}
%			\forall x \in \W^{n_x}_T, ~ \exists \psi_x \in \W^{n_r(d_N+1)}_T ~ : \qquad r_x(t) = \bar N \psi_x(t), \quad \forall t \in [0,T],
%		\end{align*}
		where $\bar N$ is the vector collecting all the coefficients of the numerator $N(p)$ as introduced in Lemma \ref{lem:feas}, and the signals $e_x$ and $r_x$ are defined as in \eqref{r_x}. %\footnote{$\|G\|_{\Hinf} \Let \sup_{\omega \in \R} \overline{\sigma}\big(G(j\omega)\big)$ is the $\Hinf$-norm of a matrix transfer function, where $\overline{\sigma}$ denotes the maximum singular value of the matrix $G(j\omega)$.}
	\end{Lem}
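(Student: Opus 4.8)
The plan is to build $\psi_x$ by an explicit block construction dictated by the coefficient layout of $N(p)$, and then to bound its norm channel by channel through the $\Lnorm$-gains of the associated transfer operators.

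First I would read off the candidate for $\psi_x$. Writing $N(p)=\sum_{i=0}^{d_N}N_ip^i$ with $N_i\in\R^{1\times n_r}$ and recalling $\bar N=[N_0~N_1~\cdots~N_{d_N}]$, I would split $\psi_x$ into $d_N+1$ blocks of length $n_r$ and set
\[
	(\psi_x)_i(t)\Let -a^{-1}(p)\,p^i[e_x](t),\qquad i=0,1,\dots,d_N.
\]
This is the only choice that turns $r_x$ into a fixed linear functional of the decision vector $\bar N$ (testing the identity on a single monomial $N(p)=N_ip^i$ pins down the $i$-th block). With it, linearity and the commutativity of the scalar operators $a^{-1}(p)$ and $p^i$ give
\[
	\bar N\psi_x=\sum_{i=0}^{d_N}N_i(\psi_x)_i=-a^{-1}(p)\Big(\sum_{i=0}^{d_N}N_ip^i\Big)[e_x]=-a^{-1}(p)N(p)[e_x]=r_x,
\]
which is \eqref{psi-rx}. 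The hypothesis $\deg a\ge d_N$ is used here to make each operator $a^{-1}(p)p^i$ proper, so that, although the distributional derivatives $p^i[e_x]$ of the merely p.w.c.\ signal $e_x$ are ill behaved on their own, each block $(\psi_x)_i$ is the output of a genuine stable proper filter and hence a well-defined element of $\W_T^{n_r}$ (after fixing zero initial conditions).

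For \eqref{psi-norm} I would descend to scalar channels. Since the $\Lnorm$-norm of a vector signal is the Euclidean aggregate of its scalar entries, $\|\psi_x\|_{\Lnorm}^2=\sum_{i=0}^{d_N}\sum_{k=1}^{n_r}\|a^{-1}(p)p^i[e_{x,k}]\|_{\Lnorm}^2$, where $e_{x,k}$ is the $k$-th component of $e_x$. Each summand is a single stable proper SISO filter acting on a scalar input, so its $\Lnorm$-gain equals the $\Hinf$-norm of the transfer function, $\|a^{-1}(p)p^i[e_{x,k}]\|_{\Lnorm}\le\|p^ia^{-1}\|_{\Hinf}\|e_{x,k}\|_{\Lnorm}$. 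Bounding $\|e_{x,k}\|_{\Lnorm}\le\|e_x\|_{\Lnorm}$ over all $n_r(d_N+1)$ channels and summing squares yields $\|\psi_x\|_{\Lnorm}^2\le n_r(d_N+1)\big(\max_i\|p^ia^{-1}\|_{\Hinf}\big)^2\|e_x\|_{\Lnorm}^2$, which is the asserted $C^2\|e_x\|_{\Lnorm}^2$ once the weighted gains are identified with $\|a^{-1}\|_{\Hinf}$.

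Two points need care, and the second is the genuine obstacle. The first is the finite horizon: the identity ``$\Lnorm$-gain $=\Hinf$-norm'' is an infinite-horizon fact, so I would extend $e_x$ by zero off $[0,T]$, drive the causal filter from rest, and observe that truncating the output back to $[0,T]$ only shrinks the $\Lnorm$-norm; a Plancherel argument on $\R$ then supplies the per-channel gain bound. The second, and the crux, is replacing the exact channel gains $\|p^ia^{-1}\|_{\Hinf}$ by the single factor $\|a^{-1}\|_{\Hinf}$ appearing in $C$: this is the step I would scrutinize most, since for a general stable $a$ the derivative-weighted gain $\|p^ia^{-1}\|_{\Hinf}$ need not be dominated by $\|a^{-1}\|_{\Hinf}$, and it is precisely here that the structure of the admissible denominators $a(p)$ (and the degree margin $\deg a\ge d_N$) must be exploited.
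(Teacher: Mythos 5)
Your construction is sound and takes a genuinely different (and more explicit) route than the paper. The paper obtains the existence of $\psi_x$ indirectly, by writing the observable canonical state-space realization of $a^{-1}(p)N(p)$ and observing that the input and feedthrough matrices depend linearly on $\bar N$ while the state matrix depends only on $a(p)$; your block formula $(\psi_x)_i=-a^{-1}(p)p^i[e_x]$ makes the same object concrete and verifies \eqref{psi-rx} by direct computation, which is cleaner. For the norm bound the paper also proceeds differently: it estimates $\|\bar N\psi_x\|_{\Lnorm}=\|a^{-1}(p)N(p)e_x\|_{\Lnorm}\le\|a^{-1}N\|_{\Hinf}\|e_x\|_{\Lnorm}$ via the $\Lnorm$-gain of the full MISO transfer function, invokes the auxiliary bound $\|a^{-1}N\|_{\Hinf}\le\sqrt{n_r(d_N+1)}\,\|a^{-1}\|_{\Hinf}\|\bar N\|_\infty$ (Lemma \ref{lem:Hnorm}), and then deduces a bound on $\|\psi_x\|_{\Lnorm}$ from the fact that the estimate holds for every $\bar N$; your channel-by-channel argument avoids this last ``duality'' step at the cost of introducing the gains $\|p^ia^{-1}\|_{\Hinf}$.

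The obstacle you flag at the end is genuine, and you should know that the paper does not actually overcome it either: the proof of Lemma \ref{lem:Hnorm} tacitly requires $\sup_{\omega\ge1}\omega^{2d_N}/|a(j\omega)|^2\le\|a^{-1}\|_{\Hinf}^2$, which is precisely the claim that the derivative-weighted gains are dominated by $\|a^{-1}\|_{\Hinf}$, and it fails for, e.g., $a(p)=(p+10)^2$ and $N(p)=p$, where $\|a^{-1}N\|_{\Hinf}=1/20$ while $\sqrt{2}\,\|a^{-1}\|_{\Hinf}=\sqrt{2}/100$. So neither your argument nor the paper's establishes the specific constant $C=\sqrt{n_r(d_N+1)}\,\|a^{-1}\|_{\Hinf}$; the degree margin $\deg a\ge d_N$ buys properness (hence finiteness of each $\|p^ia^{-1}\|_{\Hinf}$) but not domination by $\|a^{-1}\|_{\Hinf}$. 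The lemma does hold with $C$ replaced by $\sqrt{n_r(d_N+1)}\max_{i\le d_N}\|p^ia^{-1}\|_{\Hinf}$, exactly as your computation shows, and this is all that is used downstream (Proposition \ref{prop:app}, Lemma \ref{lem:phi} and Theorem \ref{thm:AP} only need some finite constant determined by $a$, $n_r$ and $d_N$). In short: your proof is correct up to the value of the constant, your suspicion about that constant is well founded, and the remedy is to redefine $C$ rather than to search for hidden structure in the admissible denominators.
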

	\begin{proof}
		See Appendix \ref{FDI:subsec:pf:nonlinear}.
	\end{proof}
	
	Given $x \in \W^{n_x}_T$ and the corresponding function $\psi_x$ as defined in Lemma \ref{lem:psi}, we have 
	\begin{align}
		\label{Q_x}
			\|r_x\|^2_{\Lnorm} = \bar N Q_x \bar N\tr, \qquad Q_x \Let \int_0^T \psi_x(t) \psi_x\tr(t) \diff t. 
	\end{align}
	We call $Q_x$ the ``\emph{signature matrix}" of the nonlinearity signature $t \mapsto e_x(t)$ resulting from the unknown signal $x$. Given $x$ and the corresponding signature matrix $Q_x$, the $\Lnorm$-norm of $r_x$ in \eqref{r_x} can be minimized by considering an objective which is a quadratic function of the filter coefficients $\bar N$ subject to the linear constraints in \eqref{LP}: 
		\begin{align} 
		\label{QP}
		\left\{ \begin{array}{lll}
				\min\limits_{\bar N}		&  \bar{N} Q_x \bar{N}\tr   \\
				\text{s.t. }	& \bar{N}\bar{H} &= 0 \vspace{1mm} \\
								& \big \| \bar{N}\bar{F} \big\|_{\infty} &\ge 1
						\end{array} \right.
		\end{align}
	The program \eqref{QP} is not a true quadratic program due to the second constraint. Following Lemma \ref{lem:inf norm}, however, one can show that the optimization program \eqref{QP} can be viewed as a family of $m$ quadratic programs where $m = n_f(d_F + d_N + 1)$. %Let us highlight once again that the signature matrix $Q_x$ in \eqref{QP} concerns the nonlinearity signature of the dynamics in the presence of a certain disturbance pattern, whereas the linear parts of the dynamics are incorporated into the constraints, in particular $\bar H$. 
	
	In the rest of the subsection, we establish an algorithmic approach to approximate the matrix $Q_x$ for a given $x \in \W^{n_x}_T$, with an arbitrary high precision. We first introduce a finite dimensional subspace of $\W^{1}_T$ denoted by
		\begin{align}
		\label{B}
			\B\Let span\{b_0,b_1, \cdots, b_k\},
		\end{align}	
	where the collection of $b_i: [0,T]\ra \R$ is a basis for $\B$. Let $\B^{n_r} \Let \bigotimes_{i=1}^{n_r} \B$ be the $n_r$ Cartesian product of the set $\B$, and $\proj : \W^{n_r}_T \ra \B^{n_r}$ be the $\Lnorm$-orthogonal projection operator onto $\B^{n_r}$, i.e., 
	\begin{align}
	\label{T-proj}
		\proj(e_x) = \sum_{i=0}^k \beta_{i}^\star b_i, \qquad \beta^\star \Let \arg\min_{\beta} \big\| e_x - \sum\limits_{i=0}^{k} \beta_i b_i\big\|_{\Lnorm}
	\end{align}
	Let us remark that if the basis of $\B$ is orthonormal (i.e., $\inner{b_i,b_j} = 0$ for $i \neq j$), then $\beta_i^\star = \int\limits_{0}^{T} b_i(t) e_x(t) \diff t$; we refer to \cite[Section 3.6]{ref:Luenberger_OptVecS} for more details on the projection operator.
	
	\begin{As}\label{a:basis}
	We stipulate that 
		\begin{enumerate} [label=(\roman*), leftmargin = *, itemsep=2mm, parsep=0mm]
			\item \label{a:basis-smooth} The basis functions $b_i$ of subspace $\B$ are smooth and $\B$ is closed under the differentiation operator $p$, i.e., for any $b \in \B$ we have $p[b] = \frac{\diff }{\diff t}b \in \B$. 			
		
			\item \label{a:basis-comp} The basis vectors in \eqref{B} are selected from an $\Lnorm$-complete basis for $\W^{1}_T$, i.e., for any $e \in \W^{n_r}_T$, the projection error $\big\| e - \proj(e) \big\|_{\Lnorm}$ can be made arbitrarily small by increasing the dimension $k$ of subspace $\B$.
		\end{enumerate}
	\end{As}
	
	The requirements of Assumptions \ref{a:basis} can be fulfilled for subspaces generated by, for example, the polynomial or Fourier basis. Thanks to Assumption \ref{a:basis}\ref{a:basis-smooth}, the linear operator $p$ can be viewed as a matrix operator. That is, there exists a square matrix $D$ with dimension $k+1$ such that
		\begin{align}
		\label{D}
			p [B(t)] = \frac{\diff}{\diff t}B(t) = D B(t), \qquad B(t)\Let[b_0(t), \cdots, b_k(t)]\tr.
		\end{align}
	In Section \ref{FDI:subsec:sim:filter} we will provide an example of such matrix operator for the Fourier basis. 
%	Let $\beta \Let [\beta_{0},\cdots,\beta_{k}]$ be the representation of $\proj(e_x)$ in terms of the basis $(b_i)_{i=1}^k$, i.e., 
%		\begin{align}
%		\label{beta}
%			\proj(e_x) = \beta B = \sum_{i=0}^k \beta_{i} b_i,
%		\end{align}
%	where $\beta_i$ are constant vectors in $\R^{n_r}$. 
	By virtue of the matrix representations of \eqref{D} we have
		\begin{align}
		\label{D-bar}
			N(p) \proj(e_x) = \sum_{i=0}^{d_N} N_i p^i \beta^\star B = \sum_{i=0}^{d_N} N_i \beta^\star D^i B = \bar N \bar D B, \qquad \bar D \Let \begin{bmatrix} \beta^\star\\ \beta^\star D \\ \vdots \\ \beta^\star D^{d_N} \end{bmatrix},
		\end{align}
	where the vector $\beta^\star \Let [\beta^\star_{0},\cdots,\beta^\star_{k}]$ is introduced in \eqref{T-proj}. If we define the positive semidefinite matrix $G \Let [G_{ij}]$ of dimension $k+1$ by
	\begin{align}
		\label{G}
		G_{ij} \Let \inner{a^{-1}(p)[b_i] , a^{-1}(p)[b_j]}, %\int_0^{T} [a^{-1}(p)B](t) [a^{-1}(p)B](t) \diff t,
	\end{align}
	%where $G_{ij}$ denotes the elements of row $i$ and column $j$ of $G$, and $b_i$ are the basis functions in \eqref{B}. %Therefore, with the aid of equations \eqref{D-bar} and \eqref{G} 
	we arrive at
		\begin{align}
		\label{Q}
			 \big \| a^{-1}(p)N(p)\proj(e) \big \|^2_{\Lnorm} = \bar N Q_{\B} {\bar N}\tr, \qquad Q_{\B} \Let \bar D G {\bar D}\tr,
			 %\Let \inner{N(p)\proj(e),N(p)\proj(e)}
		\end{align}
	where $\bar D$ and $G$ are defined in \eqref{D-bar} and \eqref{G}, respectively. Note that the matrices $G$ and $D$ are built by the data of the subspace $\B$ and denominator $a(p)$, whereas the nonlinearity signature only influences the coefficient $\beta^\star$. The above discussion is summarized in Algorithm \ref{alg} with an emphasis on models described by the ODE \eqref{ode}, while Proposition \ref{prop:app} addresses the precision of the approximation scheme. %The precision of the algorithm output is quantified in Theorem \ref{thm:app}. 

	\begin{algorithm}[t!]
	\caption{Computing the signature matrix $Q_x$ in \eqref{Q_x}}
	\label{alg}
	\begin{algorithmic} 
	\State
	\begin{enumerate} [label=$(\roman*)$, itemsep = 2mm, topsep = -2mm, leftmargin = 5mm]
	\item {\bf Initialization of the Filter Paramters:}
		\begin{enumerate} %[label=$\arabic*.$, itemsep = -1mm, topsep = -2mm, leftmargin = 5mm]
			\item Select a stable filter denominator $a(p)$, a numerator degree $d_N$ not higher than $a(p)$ order, and horizon $T$
			\item Select a basis $\{b_i\}_{i=1}^{k} \subset \W_T^1$ satisfying Assumptions \ref{a:basis}
			\item Compute the differentiation matrix $D$ in \eqref{D}
			\item Compute the matrix $G$ in \eqref{G} \footnotemark
		\end{enumerate}
	\item {\bf Identification of the Nonlinearity Signature:}
		\begin{enumerate} %[label=$\arabic*.$, itemsep = -1mm, topsep = -2mm, leftmargin = 5mm]
			\item \underline{Input} the disturbance pattern $d(\cdot)$ for time horizon $T$
			\item Solve \eqref{ode} under inputs $d(\cdot)$ and $f \equiv 0$ to obtain the internal state $X(\cdot)$
			\item Set the unknown signal $x(t) \Let [X\tr(t),~ d\tr(t)]\tr$
			\item Set the nonlinearity signature $e_x(t) \Let \big[ E_X\tr\big(x(t)\big),~ E_Y\tr\big(x(t)\big)\big]\tr$
		\end{enumerate}
	\item {\bf Computation of the Signature Matrix}
		\begin{enumerate} %[label=$\arabic*.$, itemsep = -1mm, topsep = -2mm, leftmargin = 5mm]
			\item Compute $\beta^\star$ from \eqref{T-proj} (in case of orthonormal basis $\beta_i^\star = \int\limits_{0}^{T} b_i(t) e_x(t) \diff t$)
			\item Compute $\bar{D}$ from \eqref{D-bar}
			\item \underline{Ouput} $Q_\B \Let \bar D G {\bar D}\tr$ in \eqref{Q}
		\end{enumerate}
	\end{enumerate}
	\end{algorithmic}
	\end{algorithm}
	
		\begin{Prop}[Signature Matrix Approximation]
		\label{prop:app}
			Consider an unknown signal $x :[0,T] \ra \R^{n_x}$ in $\W^{n_x}_T$ and the corresponding nonlinearity signature $e_x$ and signature matrix $Q_x$ as defined in \eqref{r_x} and \eqref{Q_x}, respectively. Let $(b_i)_{i \in \N} \subset \W^{1}_T$ be a family of basis functions satisfying Assumptions \ref{a:basis}, and let $\B$ be the finite dimensional subspace in \eqref{B}. If $\| e_x - \proj(e_x)\|_{\Lnorm} < \delta$, where $\proj$ is the projection operator onto $\B^{n_r}$, then
			\begin{align}
			\label{QP-eps}
				%\| e_x - \proj(e_x)\|_{\Lnorm} \le \eps \quad \Ra \quad 
				\big \| Q_x - Q_{\B} \big \|_2 < \bar{C} \delta, % \| e_x - \proj(e_x)\|_{\Lnorm} ,	
				\qquad \bar C \Let \big(1+2\|e_x\|_{\Lnorm}\big)C \|a^{-1}\|_{\Hinf} ,			
			\end{align}
			where $Q_{\B}$ is obtained by \eqref{Q} (the output of Algorithm \ref{alg}), and $C$ is the same constant as in \eqref{psi-norm}.
		\end{Prop}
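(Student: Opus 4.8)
The plan is to read both $Q_x$ and $Q_{\B}$ as Gram (``signature'') matrices of the \emph{same} filter $a^{-1}(p)N(p)$ applied to two inputs, $e_x$ and its projection $\proj(e_x)$, and then to control the matrix difference through the difference of the induced quadratic forms. Concretely, \eqref{Q_x} together with Lemma \ref{lem:psi} gives $\bar N Q_x \bar N\tr = \|r_x\|_{\Lnorm}^2 = \|a^{-1}(p)N(p)[e_x]\|_{\Lnorm}^2$, while \eqref{Q} gives $\bar N Q_{\B} \bar N\tr = \|a^{-1}(p)N(p)[\proj(e_x)]\|_{\Lnorm}^2$; in other words $Q_{\B}$ is exactly the signature matrix of the projected signal $\proj(e_x)$. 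Since $Q_x$ and $Q_{\B}$ are symmetric, $\|Q_x - Q_{\B}\|_2 = \max_{\|\bar N\|_2 = 1} \big| \bar N (Q_x - Q_{\B}) \bar N\tr \big|$, so it suffices to bound the scalar gap $\|u\|_{\Lnorm}^2 - \|v\|_{\Lnorm}^2$ uniformly over unit $\bar N$, where $u \Let a^{-1}(p)N(p)[e_x]$ and $v \Let a^{-1}(p)N(p)[\proj(e_x)]$.

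For the core estimate I would factor the gap as $\|u\|_{\Lnorm}^2 - \|v\|_{\Lnorm}^2 = \inner{u-v,\, u+v}$ and apply Cauchy--Schwarz, giving $\big| \|u\|_{\Lnorm}^2 - \|v\|_{\Lnorm}^2 \big| \le \|u-v\|_{\Lnorm}\,(\|u\|_{\Lnorm} + \|v\|_{\Lnorm})$. The second factor is handled by Lemma \ref{lem:psi} and the non-expansiveness of the orthogonal projection: for unit $\bar N$ one has $\|u\|_{\Lnorm} = \|\bar N \psi_x\|_{\Lnorm} \le \|\psi_x\|_{\Lnorm} \le C\|e_x\|_{\Lnorm}$, and likewise $\|v\|_{\Lnorm} \le C\|\proj(e_x)\|_{\Lnorm} \le C\|e_x\|_{\Lnorm}$, so $\|u\|_{\Lnorm}+\|v\|_{\Lnorm} \le 2C\|e_x\|_{\Lnorm}$. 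The first factor is the projection error pushed through the same filter, $u - v = a^{-1}(p)N(p)[e_x - \proj(e_x)]$, so invoking Lemma \ref{lem:psi} for the signal $e_x - \proj(e_x)$, whose $\Lnorm$-norm is below $\delta$ by hypothesis, controls $\|u-v\|_{\Lnorm}$ linearly in $\delta$. Multiplying the two factors already yields a bound of the form $\mathrm{const}\cdot\|e_x\|_{\Lnorm}\,\delta$.

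An equivalent, more transparent outer-product route expands $Q_x - Q_{\B} = \int_0^T \big( \psi_x \psi_x\tr - \psi_{\B}\psi_{\B}\tr \big)\diff t$, sets $\eta \Let \psi_x - \psi_{\B}$ (the $\psi$-lift of the projection error), telescopes $\psi_x\psi_x\tr - \psi_{\B}\psi_{\B}\tr = \psi_x\eta\tr + \eta\psi_x\tr - \eta\eta\tr$, and bounds each $\big\|\int_0^T fg\tr\,\diff t\big\|_2 \le \|f\|_{\Lnorm}\|g\|_{\Lnorm}$; this is designed to produce the two summands in $\bar C = (1 + 2\|e_x\|_{\Lnorm})\,C\,\|a^{-1}\|_{\Hinf}$, the $2\|e_x\|_{\Lnorm}$ piece from the cross products and the $1$ piece from $\eta\eta\tr$. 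I expect the genuine difficulty to lie purely in the constant bookkeeping rather than in the structure: the naive use of \eqref{psi-norm} charges each error factor a full $C = \sqrt{n_r(d_N+1)}\|a^{-1}\|_{\Hinf}$, whereas reaching the stated $\bar C$ requires the sharper per-component estimate on the filter that replaces one such factor by $\|a^{-1}\|_{\Hinf}$, together with the small-error regime $\delta \le 1$ needed to fold the quadratic $\eta\eta\tr$ contribution into a bound linear in $\delta$. The finite-horizon and initial-condition subtleties attached to reading $\|a^{-1}\|_{\Hinf}$ as an $\Lnorm$-gain on $[0,T]$ are already absorbed into Lemma \ref{lem:psi}, so they need not be revisited.
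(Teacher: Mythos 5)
Your proposal matches the paper's proof: the paper likewise reduces $\|Q_x - Q_{\B}\|_2$ to $\sup_{\|\bar N\|_2=1}\big|\,\|r_x\|_{\Lnorm}^2 - \|a^{-1}(p)N(p)\proj(e_x)\|_{\Lnorm}^2\big|$, bounds the difference of norms by the filter's $\Hinf$-gain applied to the projection error (the elementary inequality $|\alpha^2-\beta^2|\le|\alpha-\beta|\big(2|\alpha|+|\alpha-\beta|\big)$ playing the role of your $\inner{u-v,\,u+v}$ factorization), and absorbs the quadratic error term to reach $(1+2\|e_x\|_{\Lnorm})$. Your closing remark that the stated $\bar C$ demands sharper constant bookkeeping than a naive double application of \eqref{psi-norm} is also accurate — that is precisely the (tersely justified) last inequality of the paper's proof.
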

		
		\begin{proof}
			See Appendix \ref{FDI:subsec:pf:nonlinear}.	
		\end{proof}
		
	%We close this subsection with a following remark, a natural extension to robustify the filter to multiple nonlinearity signatures.  
	
	\begin{Rem}[Multi Signatures Training]
		\label{rem:multi}
			In order to robustify the FDI filter to more than one unknown signal, say $\{x_i(\cdot)\}_{i=1}^{n}$, one may introduce an objective function as an average cost $\bar N \big(\frac{1}{n}\sum_{i=1}^n Q_{x_i}\big)\bar{N}\tr$ or the worst case viewpoint $\max_{i\le n} \bar{N} Q_{x_i} \bar{N}\tr$, where $Q_{x_i}$ is the signature matrix corresponding to $x_i$ as defined in \eqref{Q_x}.
	\end{Rem}

	%------------------------------------------------------------------------------------------
	\subsection{Proposed Methodology and Probabilistic Performance} \label{FDI:subsec:perf}
	%------------------------------------------------------------------------------------------
	
	The preceding subsection proposed two optimization-based approaches to enhance the FDI filter design from linear to nonlinear system dynamics. Approach (I) targets the fault sensitivity while neglecting the nonlinear term of the system dynamics, and Approach (II) offers a QP framework to robustify the residual with respect to signatures of the dynamic nonlinearities. Here our aim is to achieve a reconciliation between these two approaches. We subsequently provide theoretical results from the proposed solutions to the original design perspectives \eqref{rcp-ccp}.

		{\footnotetext{A conservative but easy-to-implement approach is to set $G$ an identity matrix with dimension $k+1$.}}

	Let $(d_i)_{i=1}^n \subset \D$ be i.i.d.\ disturbance patterns generated from the probability space $(\Omega, \sigalg, \PP)$. For each $d_i$, let $x_i$ be the corresponding unknown signal with the associated signature matrix $Q_{x_i}$ as defined in \eqref{Q_x}. In regard to the average perspective $\Ap$, we propose the two-stage (random) optimization program
	\begin{subequations}
		\label{APr}
			\begin{align} 
			\label{APr_1}
				\APr_1:& \left\{ \begin{array}{lll}
					\min\limits_{\gamma, \bar N}		&  \gamma   \\
					\text{s.t. }	& \bar{N}\bar{H}  &= 0 \vspace{1mm} \\
									& \big \| \bar{N}\bar{F} \big\|_{\infty} &\ge 1 \vspace{1mm}\\
									& \frac{1}{n}\sum\limits_{i=1}^n J \Big( \sqrt{\bar N Q_{x_i} \bar{N}\tr} \Big) &\le \gamma
							\end{array} \right.
				%\qquad 
%				&\CPr_1:& \left\{ \begin{array}{lll}
%					\min\limits_{\gamma, \bar N}		&  \gamma   \\
%					\text{s.t. }	& \bar{N}\bar{H} &= 0 \vspace{1mm} \\
%									& \big \| \bar{N}\bar{F} \big\|_{\infty} &\ge 1 \vspace{2mm}\\
%									& \max\limits_{i\le n} {\bar{N} Q_{x_i} \bar{N}\tr} & \le \gamma
%							\end{array} \right. 
			\\ %\vspace{2mm}
			\label{APr_2}
				\APr_2:& \left\{ \begin{array}{lll}
					\max\limits_{\bar N}		&   \big \| \bar{N}\bar{F} \big\|_{\infty}  \\
					\text{s.t. }	& \bar{N}\bar{H} &= 0 \vspace{1mm} \\
									& \big \| \bar{N} \big\|_{\infty} & \le 1 \vspace{1mm}\\
									& \frac{1}{n}\sum\limits_{i=1}^n J \Big({\| \bar N_1\opt\|_\infty}{\sqrt{\bar N Q_{x_i} \bar{N}\tr}} \Big) & \le {\gamma_1\opt}
							\end{array} \right.
				%\qquad 
%				&\CPr_2:& \left\{ \begin{array}{lll}
%					\max\limits_{\bar N}		&  \big \| \bar{N}\bar{F} \big\|_{\infty}    \\
%					\text{s.t. }	& \bar{N}\bar{H} &= 0 \vspace{1mm} \\
%									& \big \|\bar{N} \big\|_{\infty} &\le {\| \bar N_1\opt\|_\infty}  \vspace{2mm}\\
%									& %{\| \bar N_1\opt\|^2_\infty} 
%									\max\limits_{i\le n} \bar{N}Q_{x_i} \bar{N}\tr & \le {\gamma_1\opt}
%							\end{array} \right.
			\end{align}
	\end{subequations}
	where  $J:\R_+ \ra \R_+$ is an increasing and convex payoff function, and in the second stage \eqref{APr_2} $\bar N_1\opt$ and $\gamma_1\opt$ are the optimizers of the first stage \eqref{APr_1}, i.e., the programs \eqref{APr} need to be solved sequentially in a \emph{lexicographic} (multi-objective) sense \cite{ref:Lex-04}. Let us recall that the filter coefficients can always be normalized with no performance deterioration. Hence, it is straightforward to observe that the main goal of the second stage is only to improve the coefficients of $\bar{N}\bar{F}$ (concerning the fault sensitivity) while the optimality of the first stage (concerning the robustification to nonlinearity signatures) is guaranteed. Similarly, we also propose the following two-stage program for the perspective $\Cp$:
	\begin{subequations}
		\label{CPr}
			\begin{align} 
			\label{CPr_1}
				\CPr_1:& \left\{ \begin{array}{lll}
					\min\limits_{\gamma, \bar N}		&  \gamma   \\
					\text{s.t. }	& \bar{N}\bar{H} &= 0 \vspace{1mm} \\
									& \big \| \bar{N}\bar{F} \big\|_{\infty} &\ge 1 \vspace{2mm}\\
									& \max\limits_{i\le n} {\bar{N} Q_{x_i} \bar{N}\tr} & \le \gamma
							\end{array} \right. 
			\\ %\vspace{2mm}
			\label{CPr_2}
				\CPr_2:& \left\{ \begin{array}{lll}
					\max\limits_{\bar N}		&  \big \| \bar{N}\bar{F} \big\|_{\infty}    \\
					\text{s.t. }	& \bar{N}\bar{H} &= 0 \vspace{1mm} \\
									& \big \|\bar{N} \big\|_{\infty} &\le 1 \vspace{2mm}\\
									& %{\| \bar N_1\opt\|^2_\infty} 
									{\| \bar N_1\opt\|^2_\infty} \big(\max\limits_{i\le n} \bar{N}Q_{x_i} \bar{N}\tr\big) & \le {\gamma_1\opt}
							\end{array} \right.
			\end{align}
		\end{subequations}

	\begin{Rem}[Computational Complexity]
	\label{rem:AP}
		In view of Lemma \ref{lem:inf norm}, all the programs in \eqref{APr} and \eqref{CPr} can be written as families of convex programs, and hence are tractable. It is, however, worth noting that in case the payoff function of $\APr$ is $J(\alpha) \Let \alpha^2$, the computational complexity of the resulting programs in \eqref{APr} is independent of the number of scenarios $n$, since the problems effectively reduce to a quadratic programming with a constraint involving the average of all the respective signature matrices (i.e., ${1 \over n}\sum_{i=1}^{n}Q_{x_i}$). This is particularly of interest if one requires to train the filter for a large number of scenarios. 
	\end{Rem}
	
	Clearly, the filter designed by programs \eqref{APr} and \eqref{CPr} is robustified to only finitely many most likely events, and as such, it may remain sensitive to disturbance patterns which have not been observed in the training phase. However, thanks to the probabilistic guarantees detailed in the sequel, we shall show that the probability of such failures (\emph{false alarm}) is low. In fact, the tractability of our proposed scheme comes at the price of allowing for rare threshold violation of the filter. The rest of the subsection formalizes this probabilistic bridge between the program \eqref{APr} (resp.\ \eqref{CPr}) and the original perspective $\Ap$ (resp.\ $\Cp$) in \eqref{rcp-ccp} when the class of filters is confined to the linear residuals characterized in Lemma \ref{lem:feas}. For this purpose, we need a technical measurability assumption which is always expected to hold in practice. 

	\begin{As}[Measurability]
		\label{a:x:meas}
		We assume that the mapping $\D \ni d \mapsto x \in \W^{n_x}_T$ is measurable where the function spaces are endowed with the $\Lnorm$-topology and the respective Borel sigma-algebra. In particular, $x$ can be viewed as a random variable on the same probability space as $d$. 
	\end{As}
	
	Assumption \ref{a:x:meas} is referred to the behavior of the system dynamics as a mapping from the disturbance $d$ to the internal states. In the context of ODEs \eqref{ode}, it is well-known that under mild assumptions (e.g., Lipschitz continuity of $E_X$) the mapping $d \mapsto X$ is indeed continuous \cite[Chapter 5]{ref:Khalil}, which readily ensures Assumption \ref{a:x:meas}.  
	
	\subsubsection{Probabilistic performance of $\APr$}
	
	Here we study the asymptotic behavior of the empirical average of $\EE{\big[J(\|r\|)\big]}$ uniformly in the filter coefficients $\bar N$, which allows us to link the solutions of programs \eqref{APr} to $\Ap$. Let $\NN \Let \big\{\bar N \in \R^{nr(d_N+1)} : \| \bar N \|_{\infty} \le 1 \big\}$ and consider the payoff function of $\Ap$ in \eqref{rcp-ccp} as the mapping $\phi : \NN \times \W^{n_x}_T \ra \R_+$:
		\begin{align}
			\label{phi}
			\phi(\bar N,x) \Let J\big(\| r_x \|_{\Lnorm}\big) = J \big( \| \bar N \psi_x \|_{\Lnorm}\big),
		\end{align}
	where the second equality follows from Lemma \ref{lem:psi}. 
	
	\begin{Thm}[Average Performance]
	\label{thm:AP}
		Suppose Assumption \ref{a:x:meas} holds and the random variable $x$ is almost surely bounded\footnote{This assumption may be relaxed in terms of the moments of $x$, though this will not be pursued further here.}. Then, the mapping $\bar N \mapsto \phi(\bar N, x)$ is a random function. Moreover, if $(x_i)_{i=1}^n \subset \W^{n_x}_T$ are i.i.d.\ random variables and $e_n$ is the uniform empirical average error 
		\begin{align}
		\label{e_n}
			%\wt{\phi}_n(\bar N) \Let \frac{1}{n}\sum\limits_{i=1}^{n} \phi(\bar N,x_i), \qquad 
			e_n \Let \sup_{\bar N \in \NN} \Big\{ \frac{1}{n}\sum\limits_{i=1}^{n} \phi(\bar N,x_i) - \EE \big[\phi(\bar N, x) \big] \Big\},
		\end{align}
		then, 
		\begin{enumerate} [label=(\roman*), leftmargin = *, itemsep=2mm, parsep=0mm]
			\item \label{slln}  the {Strong Law of Large Numbers} (SLLN) holds, i.e., $ \lim\limits_{n \ra \infty} e_n = 0$ almost surely. 
			
			\item \label{uclt} the {Uniform Central Limit Theorem} (UCLT) holds, i.e., $\sqrt{n}e_n$ converges in law to a Gaussian variable with distribution $N(0,\sigma)$ for some $\sigma \ge 0$. 
		\end{enumerate}
	\end{Thm}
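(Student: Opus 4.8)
The claim is a statement in empirical process theory: writing $\mathscr{F} \Let \{\phi(\bar N,\cdot):\bar N\in\NN\}$ for the family of functions of $x$ indexed by the compact set $\NN=\{\bar N:\|\bar N\|_\infty\le 1\}$, part \ref{slln} asserts that $\mathscr{F}$ is Glivenko--Cantelli and part \ref{uclt} that $\mathscr{F}$ is Donsker. My plan is to deduce both from two structural properties of $\mathscr{F}$ --- a Lipschitz dependence on the index $\bar N$ and a bounded (hence integrable) envelope --- after first establishing the measurability that makes the statement meaningful. For the \emph{random function} assertion I would verify that $\phi$ is a Carath\'eodory map: for fixed $\bar N$ the map $x\mapsto\phi(\bar N,x)$ is measurable, being the composition of the measurable map $d\mapsto x$ (Assumption \ref{a:x:meas}) with the $\Lnorm$-continuous operations $x\mapsto e_x=E(x)$ (Lipschitz $E$), $e_x\mapsto\psi_x$ (the bounded linear filtering of Lemma \ref{lem:psi}), and $\psi_x\mapsto J(\|\bar N\psi_x\|_{\Lnorm})$; while for fixed $x$ the map $\bar N\mapsto\phi(\bar N,x)$ is continuous by the Lipschitz bound below. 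Joint measurability then lets me reduce the uncountable supremum defining $e_n$ to one over a countable dense subset of $\NN$, so that $e_n$ is a bona fide random variable.

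The key estimate is the Lipschitz bound in $\bar N$. Because $x$ is almost surely bounded, so is $e_x$ (Lipschitz $E$) and hence $\psi_x$, with $\|\psi_x\|_{\Lnorm}\le C\|e_x\|_{\Lnorm}$ by Lemma \ref{lem:psi}; consequently $\|\bar N\psi_x\|_{\Lnorm}$ ranges over a bounded interval on which the convex increasing $J$ is Lipschitz, with constant $L_J(x)$. Using the reverse triangle inequality for the $\Lnorm$-norm,
\begin{align*}
	|\phi(\bar N_1,x)-\phi(\bar N_2,x)| \le L_J(x)\,\|(\bar N_1-\bar N_2)\psi_x\|_{\Lnorm} \le L_J(x)\,\|\psi_x\|_{\Lnorm}\,\|\bar N_1-\bar N_2\|_2,
\end{align*}
so $L(x)\Let L_J(x)\|\psi_x\|_{\Lnorm}$ is an almost surely bounded Lipschitz modulus, in particular in $L^2(\PP)$. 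The envelope $\Phi(x)\Let\sup_{\bar N\in\NN}\phi(\bar N,x)=J\big(\sup_{\|\bar N\|_\infty\le1}\|\bar N\psi_x\|_{\Lnorm}\big)$ is bounded for the same reason, whence $\Phi\in L^2(\PP)$.

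With these ingredients the conclusion is standard. Covering the compact set $\NN$, of dimension $\kappa\Let n_r(d_N+1)$, by $\delta$-balls needs at most $(c/\delta)^\kappa$ of them, and the Lipschitz bound turns each ball into an $L^2(\PP)$-bracket of width $\lesssim\delta\,\|L\|_{L^2(\PP)}$; hence the bracketing numbers satisfy $N_{[]}(\eps,\mathscr{F},L^2(\PP))\lesssim\eps^{-\kappa}$. Finiteness of these numbers for every $\eps>0$ yields the Glivenko--Cantelli property and thus part \ref{slln}. For part \ref{uclt} the same bound makes the bracketing integral $\int_0^\infty\sqrt{\log N_{[]}(\eps,\mathscr{F},L^2(\PP))}\,\diff\eps\lesssim\int_0^1\sqrt{\kappa\log(1/\eps)}\,\diff\eps$ finite, so $\mathscr{F}$ is Donsker: the empirical process $\bar N\mapsto\sqrt n\big(\frac1n\sum_{i=1}^n\phi(\bar N,x_i)-\EE[\phi(\bar N,x)]\big)$ converges weakly in $\ell^\infty(\NN)$ to a tight centered Gaussian process, and applying the continuous mapping theorem to the (Lipschitz) supremum functional delivers the asymptotic Gaussian law of $\sqrt n\,e_n$ asserted in part \ref{uclt}.

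The main obstacle I anticipate is not the entropy computation, which is routine once the Lipschitz property is secured, but the measurability bookkeeping: pinning down the Carath\'eodory structure so that $e_n$ is measurable, and confirming that almost-sure boundedness of $x$ genuinely propagates through $E$ and the filter of Lemma \ref{lem:psi} to a uniformly integrable Lipschitz modulus $L(x)$ and envelope $\Phi(x)$. The convexity hypothesis on $J$ is precisely what guarantees the local Lipschitz constant $L_J(x)$ is finite; had $J$ been assumed only of class $\mathcal{K}_\infty$, one would need to append such a regularity requirement by hand.
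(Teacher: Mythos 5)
Your proposal is correct and follows essentially the same route as the paper: both arguments rest on the Lipschitz continuity of $\bar N \mapsto \phi(\bar N,x)$ obtained from Lemma \ref{lem:psi} together with the convexity of $J$ on the almost-surely bounded range of $\|\bar N\psi_x\|_{\Lnorm}$, and then invoke off-the-shelf uniform limit theorems for a Lipschitz class indexed by the finite-dimensional compact set $\NN$ (the paper cites Hansen's first-moment continuity for the SLLN and Dudley's logarithmic $\eps$-capacity condition for the UCLT where you compute bracketing numbers --- equivalent packagings of the same entropy bound for such a class). The only caveat is that your final continuous-mapping step actually yields convergence of $\sqrt{n}\,e_n$ to the supremum of the limiting Gaussian process, which is not itself Gaussian; the paper's own proof glosses over this in exactly the same way, so this is an imprecision of the theorem's phrasing rather than a defect of your argument.
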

	
	\begin{proof}
		%For the proof of \ref{slln} (resp.\ \ref{uclt}), it suffices to verify that the required conditions of \cite[Thm.\ 2.1]{ref:Hansen-2012} (resp.\ \cite[Thm.\ 6.3.3, p.\ 208]{ref:Dudley99}) are met; 
		See Appendix \ref{FDI:subsec:pf:perf} along with required preliminaries.
	\end{proof}
	
	The following Corollary is an immediate consequence of the UCLT in Theorem \ref{thm:AP} \ref{uclt}. 
	\begin{Cor}
		\label{cor:AP}
		Let assumptions of Theorem \ref{thm:AP} hold, and $e_n$ be the empirical average error \eqref{e_n}. For all $\eps > 0$ and $k < \frac{1}{2}$, we have 
		\begin{align*}
			\lim_{n \ra \infty} \PP^n \big( n^ke_n \ge \eps \big) = 0,
		\end{align*}
		where $\PP^n$ denotes the $n$-fold product probability measure on $\big(\Omega^n, \sigalg^n \big)$.
	\end{Cor}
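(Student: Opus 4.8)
The plan is to reduce the claim to the Uniform Central Limit Theorem already established in Theorem \ref{thm:AP}\ref{uclt} by a simple scaling argument. The starting point is the algebraic identity $n^k e_n = n^{k - 1/2}\,\big(\sqrt{n}\,e_n\big)$, valid for every $n$ and every exponent $k$. Here the second factor $\sqrt{n}\,e_n$ is exactly the quantity whose limiting behaviour is controlled by the UCLT, while the first factor is a deterministic sequence that vanishes precisely because $k < \tfrac{1}{2}$.

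First I would record the consequence of weak convergence that is actually needed, namely tightness (boundedness in probability) of the sequence $\sqrt{n}\,e_n$. Since by Theorem \ref{thm:AP}\ref{uclt} the laws of $\sqrt{n}\,e_n$ converge weakly to the finite Gaussian law $N(0,\sigma)$, the sequence is uniformly tight: for every $\eta > 0$ there is a threshold $M = M(\eta)$ with $\sup_n \PP^n\big(\sqrt{n}\,e_n \ge M \big) \le \eta$.

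Next I would rewrite the event of interest in terms of a diverging threshold. For fixed $\eps > 0$ and $k < \tfrac12$ set $M_n \Let \eps\, n^{1/2 - k}$, so that $\{ n^k e_n \ge \eps \} = \{ \sqrt{n}\,e_n \ge M_n \}$ and, because $k < 1/2$, we have $M_n \to \infty$. Given $\eta > 0$, choose $M$ as above; for all $n$ large enough that $M_n \ge M$ one obtains $\PP^n\big(n^k e_n \ge \eps\big) = \PP^n\big(\sqrt{n}\,e_n \ge M_n\big) \le \PP^n\big(\sqrt{n}\,e_n \ge M \big) \le \eta$. Since $\eta > 0$ was arbitrary, $\limsup_{n\to\infty}\PP^n\big(n^k e_n \ge \eps\big) = 0$, which is the assertion. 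Equivalently, the same computation can be phrased through Slutsky's theorem: the product of the deterministic null sequence $n^{k-1/2}$ with the weakly convergent $\sqrt{n}\,e_n$ converges in distribution to the constant $0$, and convergence in distribution to a constant is convergence in probability, giving the stated limit.

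No serious obstacle is expected, as the corollary is a direct scaling consequence of the UCLT; the only point requiring a line of care is that the threshold $M_n$ is itself diverging, so one must invoke uniform tightness of the convergent sequence (rather than weak convergence against a single fixed bounded continuous test function) in order to dominate $\PP^n\big(\sqrt{n}\,e_n \ge M_n\big)$ uniformly in $n$.
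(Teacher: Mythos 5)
Your argument is correct and follows exactly the route the paper intends: the paper states the corollary as an immediate consequence of the UCLT in Theorem \ref{thm:AP}\ref{uclt} without spelling out details, and your scaling identity $n^k e_n = n^{k-1/2}\big(\sqrt{n}\,e_n\big)$ together with tightness of the weakly convergent sequence $\sqrt{n}\,e_n$ (or, equivalently, Slutsky's theorem) is precisely the standard way to make that implication rigorous. Your remark that one needs tightness against the diverging threshold $M_n$, rather than weak convergence against a single test function, is the right point of care and is handled correctly.
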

	
	\subsubsection{Probabilistic performance of $\CPr$}
	
	The formulation $\Cp$ in \eqref{rcp-ccp} is known as chance constrained program which has received increasing attention due to recent developments toward tractable approaches, in particular via the scenario counterpart (cf. $\CPr$ in \eqref{prob-rand}) in a convex setting \cite{ref:CalCam-06,ref:CamGar-08}. These studies are, however, not directly applicable to our problem due to the non-convexity arising from the constraint $\|\bar N \bar F \|_\infty \ge 1$. Here, following our recent work \cite{ref:MohSut-13}, we exploit the specific structure of this non-convexity and adapt the scenario approach accordingly. 

	Let $\big(\bar N_n \opt, \gamma_n\opt\big)$ be the optimizer obtained through the two-stage programs \eqref{CPr} where $\bar N_n \opt$ is the filter coefficients and $\gamma_n\opt$ represents the filter threshold; $n$ is referred to the number of disturbance patterns. Given the filter $\bar N_n \opt$, let us denote the corresponding filter residual due to the signal $x$ by $r_x[\bar N_n^*]$; this is a slight modification of our notation $r_x$ in \eqref{r_x} to specify the filter coefficients. To quantify the filter performance, one may ask for the probability that a new unknown signal $x$ violates the threshold $\gamma_n\opt$ when the FDI filter is set to $\bar N_n \opt$  (i.e., the probability that {\small $\big \|r_x[\bar N_n^*] \big \|^2_{\Lnorm} > {\gamma_n^*}$}). In the FDI literature such a violation is known as a false alarm, and from the $\CP$ standpoint its occurrence probability is allowed at most to the $\eps$ level. In this view the performance of the filter can be quantified by the event
		\begin{align}
			\label{event}
				\mathcal{E}\big(\bar N_n \opt, \gamma^*_n\big) \Let \Big \{ \PP \Big( \big \|r_x[\bar N_n^*] \big \|^2_{\Lnorm} > \gamma_n^*  \Big) > \eps \Big \}. 
		\end{align}
	The event \eqref{event} accounts for the feasibility of the $\CPr$ solution from the original perspective $\CP$. Note that the measure $\PP$ in \eqref{event} is referred to $x$ whereas the stochasticity of the event stems from the random solutions $\big(\bar N_n \opt, \gamma^*_n\big)$.\footnote{The measure $\PP$ is, with slight abuse of notation, the induced measure via the mapping addressed in Assumption \ref{a:x:meas}.} %We proceed with the main result of this part in regard to the likelihood of the event $\eqref{event}$. 

		\begin{Thm}[Chance Performance]
		\label{thm:CP}
			Suppose Assumption \ref{a:x:meas} holds and $(x_i)_{i=1}^n$ are i.i.d.\ random variables on $(\Omega,\sigalg,\PP)$. Let $\bar N_n \opt \in \R^{n_r(d_N+1)}$ and $\gamma^*_n \in \R_+$ be the solutions of $\CPr$, and measurable in $\sigalg^n$. Then, the set \eqref{event} is $\sigalg^n$-measurable, and for every $\beta \in (0,1)$ and any $n$ such that
			\begin{align*}
				n \ge \frac{2}{\eps}\Big(\ln\frac{n_f(d_F + d_N + 1)}{\beta} + n_r(d_N + 1) + 1 \Big),
			\end{align*}
			where $d_N$ is the degree of the filter and $n_f, n_r, d_F$ are the system size parameters of \eqref{model}, we have
			\begin{align*}
				\PP^n\Big( \mathcal{E}\big(\bar N_n \opt, \gamma^*_n\big) \Big) < \beta.
			\end{align*} 
		\end{Thm}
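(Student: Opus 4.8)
The plan is to reduce the non-convex scenario program $\CPr$ to a finite family of convex scenario programs, apply the classical scenario bound to each, and combine them through a union bound. First I would dispose of the measurability claim: since $\bar N_n\opt$ and $\gamma_n\opt$ are assumed $\sigalg^n$-measurable and, by Assumption \ref{a:x:meas} together with \eqref{Q_x}, the map $(\bar N,\gamma) \mapsto \PP\big(\bar N Q_x \bar N\tr > \gamma\big)$ is measurable in its arguments, the composition defining the event \eqref{event} is $\sigalg^n$-measurable. I would also note that the relevant threshold is fixed by the first stage $\CPr_1$, the second stage $\CPr_2$ only reshaping $\bar N\bar F$ (fault sensitivity) while preserving the normalized threshold feasibility; hence it suffices to analyze $\CPr_1$.

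The key structural step is the decomposition supplied by Lemma \ref{lem:inf norm}. The only source of non-convexity in $\CPr_1$ is the constraint $\|\bar N\bar F\|_\infty \ge 1$, and Lemma \ref{lem:inf norm} expresses the corresponding feasible set as $\NN \cup -\NN = \bigcup_{j=1}^m (\NN_j \cup -\NN_j)$ with $m = n_f(d_F+d_N+1)$. On each piece the defining inequality becomes a single linear constraint $\bar N\bar F v_j \ge 1$ (or $\le -1$); moreover, because each signature matrix $Q_{x_i}$ is positive semidefinite by \eqref{Q_x}, the epigraph constraint $\bar N Q_{x_i}\bar N\tr \le \gamma$ is convex in $(\bar N,\gamma)$. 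Thus on each piece $\CPr_1$ is a genuine convex scenario program in the decision vector $(\bar N,\gamma) \in \R^{n_r(d_N+1)} \times \R_+$, of dimension $n_r(d_N+1)+1$. I would further exploit the symmetry $\bar N \mapsto -\bar N$, under which both $\bar N\bar H = 0$ and $\bar N Q_{x_i}\bar N\tr$ are invariant, to identify $-\NN_j$ with $\NN_j$ and reduce the count from $2m$ to $m$ convex programs; this is precisely what produces the $\ln(m/\beta)$ rather than $\ln(2m/\beta)$ in the stated bound.

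With the decomposition in hand, I would invoke the scenario result of \cite{ref:MohSut-13} (in the convex setting of \cite{ref:CalCam-06,ref:CamGar-08}) on each of the $m$ convex pieces. For a convex scenario program with $d$ decision variables the simplified guarantee states that if $n \ge \tfrac{2}{\eps}\big(\ln\tfrac{1}{\beta'} + d\big)$, then the optimizer violates the chance constraint at level $\eps$ with probability at most $\beta'$. Setting $d = n_r(d_N+1)+1$ and $\beta' = \beta/m$ yields exactly the hypothesis $n \ge \tfrac{2}{\eps}\big(\ln\tfrac{m}{\beta} + n_r(d_N+1)+1\big)$. Denoting by $\mathcal{E}_j$ the violation event for the $j$-th piece, the crucial observation is that the global optimizer $(\bar N_n\opt,\gamma_n\opt)$ coincides with the optimizer of whichever piece attains the smallest $\gamma$; consequently the global violation event satisfies $\mathcal{E}(\bar N_n\opt,\gamma_n\opt) \subseteq \bigcup_{j=1}^m \mathcal{E}_j$. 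A union bound then gives $\PP^n\big(\mathcal{E}(\bar N_n\opt,\gamma_n\opt)\big) \le \sum_{j=1}^m \PP^n(\mathcal{E}_j) < m\cdot \tfrac{\beta}{m} = \beta$.

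The hard part will be making the ``selected piece'' inclusion $\mathcal{E} \subseteq \bigcup_j \mathcal{E}_j$ fully rigorous despite the index of the minimizing piece being itself random: one must argue that, whatever (random) piece realizes the global optimum, its own scenario-optimizer is the global solution, so that a global violation forces a violation of that piece. A secondary technical point is to confirm that the classical scenario guarantee applies verbatim to each convex piece, i.e.\ that the number of support constraints does not exceed the Helly dimension $n_r(d_N+1)+1$, and to verify that the lexicographic second stage $\CPr_2$ does not degrade the threshold feasibility certified for $\CPr_1$, which follows because the filter coefficients may be renormalized without performance loss.
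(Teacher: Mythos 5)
Your proposal is correct and follows essentially the same route as the paper: decompose $\CPr_1$ via Lemma \ref{lem:inf norm} into $m = n_f(d_F+d_N+1)$ convex scenario programs of dimension $n_r(d_N+1)+1$, apply the Campi--Garatti/Calafiore scenario bound to each with confidence $\beta/m$, observe that the global optimizer coincides with one of the piecewise optimizers so that $\mathcal{E}\subseteq\bigcup_j\mathcal{E}_j$, and conclude by a union bound. Your explicit symmetry argument identifying $-\NN_j$ with $\NN_j$ is a small refinement the paper leaves implicit, but it does not change the substance of the argument.
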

		
	\begin{proof}
		See Appendix \ref{FDI:subsec:pf:perf}.
	\end{proof}
	
%==========================================================================================
\section{Cyber-Physical Security of Power Systems: AGC Case Study} \label{FDI:sec:sim}
%==========================================================================================
	In this section, we illustrate the performance of our theoretical results to detect a cyber intrusion in a two-area power system. Motivated by our earlier studies \cite{ref:Mohajerin_ACC10, ref:Mohajerin_CDC10}, we consider the IEEE 118-bus power network equipped with primary and secondary frequency control. While the primary frequency control is implemented locally, the secondary loop, referred also as AGC (Automatic Generation Control), is closed over the SCADA system without human operator intervention. As investigated in \cite{ref:Mohajerin_ACC10}, a cyber intrusion in this feedback loop may cause unacceptable frequency deviations and potentially load shedding or generation tripping. If the intrusion is, however, detected on time, one may prevent further damage by disconnecting the AGC. We show how to deploy the methodology developed in earlier sections to construct an FDI filter that uses the available measurements to diagnose an AGC intrusion sufficiently fast, despite the presence of unknown load deviations. 
	%------------------------------------------------------------------------------------------
	\subsection{Mathematical Model Description}
	%------------------------------------------------------------------------------------------
	In this section a multi-machine power system, based only on frequency dynamics, is described \cite{ref:Goran_mod}. The system is arbitrarily divided into two control areas. The generators are equipped with primary frequency control and each area is under AGC which adjusts the generating setpoints of specific generators so as to regulate frequency and maintain the power exchange between the two areas to its scheduled value.
	
	\subsubsection{System description}

	We consider a system comprising $n$ buses and $g$ number of generators. Let $G = \{i\}_1^g$ denote the set of generator indices and $A_1 = \{i \in G ~|~ i \text{ in Area 1} \}$, $A_2 = \{i \in G ~|~ i \text{ in Area 2} \}$ the sets of generators that belong to Area 1 and Area 2, respectively. Let also $$L_{tie}^k = \{(i,j)| i,j \text{~edges of a tie line from area} ~k \text{ to the other areas}\},$$ 
	where a tie line is a line connecting the two independently controlled areas and let also $K=\{1,2\}$ be the set of the indices of the control areas in the system.
	 
	 Using the classical generator model every synchronous machine is modeled as constant voltage source behind its transient reactance. The dynamic states of the system are the rotor angle $\delta_i$ $(rad)$, the rotor electrical frequency $f_i$ $(Hz)$ and the mechanical power (output of the turbine) $P_{mi}$ $(MW)$ for each generator $i\in G$. We also have one more state that represents the output of the AGC $\Delta P_{agc_k}$ for each control area $k\in K$.
	 % This state is introduced because the AGC is a PI controller.
	 
	 We denote by $E_G \in\mathbb{C}^{g}$ a vector consisting of the generator internal node voltages $E_{Gi}=|E_{Gi}^0|\angle{\delta_i}$ for $i\in G$. The phase angle of the generator voltage node is assumed to coincide with the rotor angle $\delta_i$ and $|E_{Gi}^0|$ is a constant. The voltages of the rest of the nodes are included in $V_N \in\mathbb{C}^{n}$, whose entries are $V_{Ni}=|V_{Ni}|\angle{\theta_i}$ for $i=1,\ldots,n$. To remove the algebraic constraints that appear due to the Kirchhoff's first law for each node, we retain the internal nodes (behind the transient reactance) of the generators and eliminate the rest of the nodes. This could be achieved only under the assumption of constant impedance loads since in that way they can be included in the network admittance matrix. The node voltages can then be linearly connected to the internal node voltages, and hence to the dynamic state $\delta_i$. This results in a reduced admittance matrix that corresponds only to the internal nodes of the generators, where the power flows are expressed directly in terms of the dynamic states of the system. The resulting model of the two area power system is described by the following set of equations.
		 \begin{align*}
			 \dot{\delta}_i &= 2 \pi (f_i-f_0),  \\ 
			 \dot{f}_i &= \frac{f_0}{2 H_i S_{B_i}}(P_{m_i}-P_{e_i}(\delta) - \frac{1}{D_i}(f_i-f_0)-\Delta P_{load_i}), \\
			 \dot{P}_{m,{a_k}} &= \frac{1}{T_{ch,a_k}}(P_{m,{a_k}}^{0} + v_{a_k} \Delta P_{p,{a_k}}^{sat} + w_{a_k}\Delta P_{agc,k}^{sat} - P_{m,{a_k}}), \\
			 \Delta \dot{P}_{agc,k} &=  \sum_{j \in A_k}c_{kj}(f_j-f_0) + \sum_{j \in A_k} b_{kj}(P_{m_j}-P_{e_j}(\delta)-\Delta P_{load_j}) \\
			 & \qquad \qquad - \frac{1}{T_{N_k}} g_k(\delta,f) - C_{p_k}h_k(\delta,f) -\frac{K_{k}}{T_{N_k}}(\Delta P_{agc,k}-\Delta P_{agc,k}^{sat}).
		 \end{align*}
	where $i\in G$, $a_k\in A_k$ for $k\in K$. Supperscript $sat$ on the AGC output signal $\Delta P_{agc,k}$ and on the primary frequency control signal $\Delta P_{p,{a_k}}$ highlights the saturation to which the signals are subjected. The primary frequency control is given by {$\Delta P_{p,i}=-(f_i-f_0)/S_i$}. Based on the reduced admittance matrix, the generator electric power output is given by  
	$$P_{ei} =  \sum_{j=1}^{g} {E_{G_i} E_{G_j} (  G_{ij}^{red} \cos (\delta_i - \delta_j) + B_{ij}^{red} \sin(\delta_i - \delta_j) ).}$$
	Moreover, {$g_k  \Let  \sum_{(i,j) \in L_{tie}^k}(P_{ij}-P_{T_{12}^0})$ } and {$h_k \Let {\diff g_k \over \diff t}$}, where the power flow $P_{ij}$, based on the initial admittance matrix of the system, is given by
	 \begin{align*}
	 	&P_{ij} = |V_{N_i}|| V_{N_j}| ( G_{ij} \cos(\theta_i-\theta_j) + B_{ij} \sin(\theta_i-\theta_j) )
	 \end{align*}
	 All undefined variables are constants, and details on the derivation of the models can be found in \cite{ref:Mohajerin_CDC12}. The AGC attack is modeled as an additive signal to the AGC signal. For instance, if the attack signal is imposed in Area 1, the mechanical power dynamics of Area 1 will be modified as
		\begin{align*}
			 &\dot{P}_{m,{a_1}} = \frac{1}{T_{ch,a_1}}(P_{m,{a_1}}^{0} + v_{a_1}\Delta P_{p,{a_1}}^{sat} + w_{a_1}\big(\Delta P_{agc_1}^{sat}+ f(t) \big)- P_{m,{a_1}}),%
		 \end{align*}

	 The above model can be compactly written as
	\begin{align}
	\label{ode-area}
		\begin{cases}
			\dot{X}(t) = h(X(t)) + B_d d(t) + B_f f(t)\\
			Y(t) = CX(t),
		\end{cases}
	\end{align}
	where $ X \Let \big[\{\delta_i\}_{1:g}, \{f_i\}_{1:g}, \{P_{m,i}\}_{1:g}, \{\Delta P_{{agc}_i}\}_{1:2}\big ]\tr \in \R^{3g+2}$ denotes the internal states vector comprising rotor angles $\delta_i$, generators frequencies $f_i$, generated mechanical powers $P_{m,i}$, and the AGC control signal $\Delta P_{{agc}_i}$ for each area. The external input $d \Let \big[\{\Delta P_{load_i}\}_{1:g}\big]\tr$ represents the unknown load disturbances (discussed in the next subsection), and $f$ represents the intrusion signal injected to the AGC of the first area. We assume that the measurements of all the frequencies and generated mechanical power are available, i.e., $Y = \big[\{f_i\}_{1:g}, \{P_{m,i}\}_{1:g}]\tr \in \R^{2g}$. The nonlinear function $h(\cdot)$ and the constant matrices $B_d$, $B_f $ and $C$ can be easily obtained by the mapping between the analytical model and (\ref{ode-area}). To transfer the ODE dynamic expression \eqref{ode-area} into the DAE \eqref{model} it suffices to introduce %$ x \Let [X\tr - X_e\tr, d\tr]\tr$ , $ z \Let Y - C X_e $ and
	\begin{small}
		\begin{align*}
			x &\Let \begin{bmatrix} X - X_e \\ d \end{bmatrix}, 
			\qquad z \Let Y - C X_e \\
			E(x) \Let \begin{bmatrix}
			h(X) - A(X-X_e)\\
				0
				\end{bmatrix}&, 
			\quad H(p) \Let \begin{bmatrix}
		                  -pI + A & B_d \\
		                   C & 0
		               \end{bmatrix},
			\quad L(p) \Let \begin{bmatrix}
		                  0  \\
		                  -I 
		               \end{bmatrix},
			\quad F(p) \Let \begin{bmatrix}
		                  B_f \\
		                  0 
		               \end{bmatrix},
		\end{align*}
	\end{small}
	where $X_e$ is the equilibrium of \eqref{ode-area}, i.e., $h(X_e) = 0$, and $A \Let \frac{\partial h}{\partial X} \big |_{X = X_e}$. Notice that by the above definition, the nonlinear term $E(\cdot)$ only carries the nonlinearity of the system while the linear terms of the dynamic are incorporated into the constant matrices $H, L, F$. This can always be done without loss of generality, and practically may improve the performance of the scheme, as the linear terms can be fully decoupled from the residual of the filter.

	\subsubsection{Load Deviations and Disturbances}
	\begin{figure}[t!]
		\centering
		\includegraphics[width=1\columnwidth]{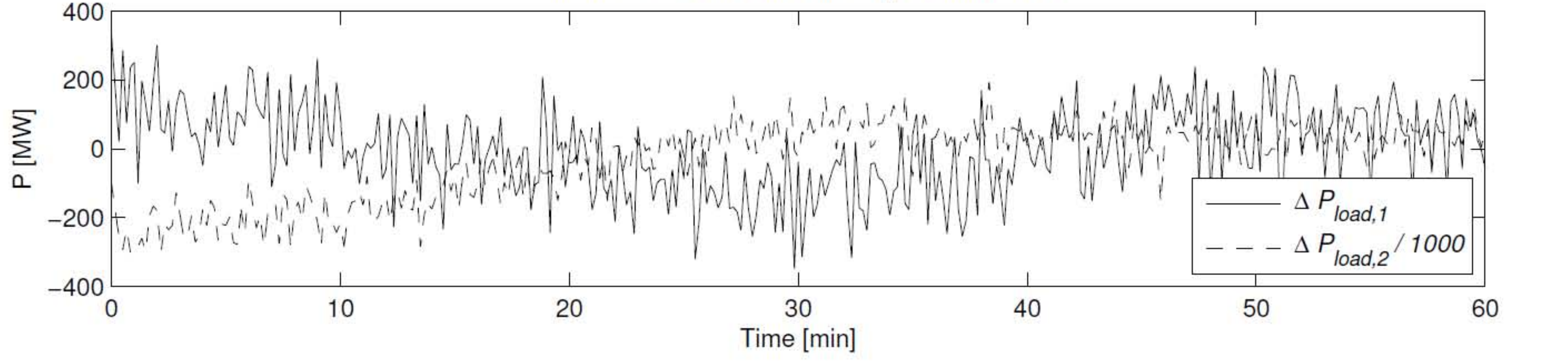}
		\caption{Stochastic load fluctuation and prediction error \cite[p.\ 59]{ref:Goran_dyn}}\label{fig:load}
	\end{figure}
	
	Small power imbalances arise during normal operation of power networks due, for example, to load fluctuation, load forecast errors, and trading on electricity market. Each of these sources give rise to deviations at different time scale. High frequency load fluctuation is typically time uncorrelated stochastic noise on a second or minute time scale, whereas forecast errors usually stem from the mismatch of predicted and actual consumption on a 15-minute time scale. Figure \ref{fig:load} demonstrates two samples of stochastic load fluctuation and forecast error which may appear at two different nodes of the network \cite[p.\ 59]{ref:Goran_dyn}. The trading on the electricity market also introduces disturbances, for example, in an hourly framework (depending on the market). 
	
	To capture these sources of uncertainty we consider a space of disturbance patterns comprising combinations of sinusoids at different frequency ranges (to model short term load fluctuation and mid-term forecast errors) and step functions (to model long-term abrupt changes due to the market).
	%Despite the inherent stochasticity of these disturbances, one may exploit these information to model the behavior of the load deviations. For example, we can describe the disturbances of the first two categories in Figure \ref{fig:load} via a family of sinusoidal functions concentrated on different frequency regions, i.e., the high frequency modes correspond to the slower modes concern the prediction mismatch. In regard to the electricity market, the hourly abrupt changes in power imbalances can be captured by a step function with a random power. In this light, 
	The space of load deviations (i.e., the disturbance patterns $\D$ in our FDI setting) is then described by 
	\begin{align}
	\label{load}
		\Delta P_{load}(t) \Let \alpha_0 + \sum_{i=1}^\eta \alpha_i \sin(\omega_i t + \phi_i), \qquad t \in [0,T],
	\end{align}
	where the parameters $(\alpha_i)^\eta_{i=0}$, $(\omega_i)^\eta_{i=1}$ $(\phi_i)^\eta_{i=1}$, and $\eta$ are random variables whose distributions induce the probability measure on $\D$. We assume that $\sum_{i=0}^{\eta} |\alpha_i|^2$ is uniformly bounded with probability 1 to meet the requirements of Theorem \ref{thm:AP}.

	%------------------------------------------------------------------------------------------
	\subsection{Diagnosis Filter Design}\label{FDI:subsec:sim:filter}
	%------------------------------------------------------------------------------------------

	To design the FDI filter, we set the degree of the filter $d_N = 7$, the denominator $a(p) = (p+2)^{d_N}$, and the finite time horizon $T = 10 \sec$. Note that the degree of the filter is significantly less than the dimension of the system \eqref{ode-area}, which is $59$. This is a general advantage of the residual generator approach in comparison to the observer-based approach where the filter order is effectively the same as the system dynamics. To compute the signature matrix $Q_x$, we resort to the finite dimensional approximation $Q_{\B}$ in Proposition \ref{prop:app}. Inspired by the class of disturbances in \eqref{load}, we first choose Fourier basis with $80$ harmonics
	\begin{align}
		\label{basis}
		b_{i}(t) \Let \begin{cases} \cos(\frac{i}{2}\omega t) \quad &i~\text{: even}\\ \sin(\frac{i+1}{2}\omega t) \quad  &i~\text{: odd} \quad \end{cases}, \qquad \qquad  \omega \Let \frac{2\pi}{T}, \quad i \in \{0,1,\cdots, 80\}.
	\end{align} 
	We should emphasize that there is no restriction on the basis selection as long as Assumptions \ref{a:basis} are fulfilled; we refer to \cite[Section V.B]{ref:Mohajerin_CDC12} for another example with a polynomial basis. Given the basis \eqref{basis}, it is easy to see that the differentiation matrix $D$ introduced in \eqref{D} is
		\begin{align*}
		   & D = \begin{bmatrix}
				0& 0			& 0		&		& \cdots	& 0	& 0 \\
				0& 0			&\omega	&	& \cdots	& 0	& 0 \\
				0& -\omega	& 	0	&	& \cdots 	& 0	& 0 \\
				\vdots&\vdots&\vdots&	&\ddots	&\vdots&\vdots \\
				0&	0		& 	0	&	&  	& 0	& 80\omega\\
				0& 0 		& 	0&	& \cdots& -80\omega& 0
				\end{bmatrix}.
		\end{align*}	
	We can also compute offline (independent of $x$) the matrix $G$ in \eqref{G} with the help of the basis \eqref{basis} and the denominator $a(p)$. To proceed with $Q_x$ of a sample $\Delta P_{load}$ we need to run the system dynamic \eqref{ode-area} with the input $d(\cdot) \Let \Delta P_{load}$ and compute $x(t) \Let [ X(t)\tr, \Delta P_{load}(t)]\tr$ where $X$ is the internal states of the system. Given the signal $x$, we then project the nonlinearity signature $t \mapsto e_x(t) \teL E\big(x(t)\big)$ onto the subspace $\B$ (i.e., $\proj(e_x)$), and finally obtain $Q_x$ from \eqref{Q}. In the following simulations, we deploy the YALMIP toolbox \cite{ref:YALMIP} to solve the corresponding optimization problems. 

		\begin{figure}[t!]
			\centering
			\includegraphics[width = 0.7\columnwidth]{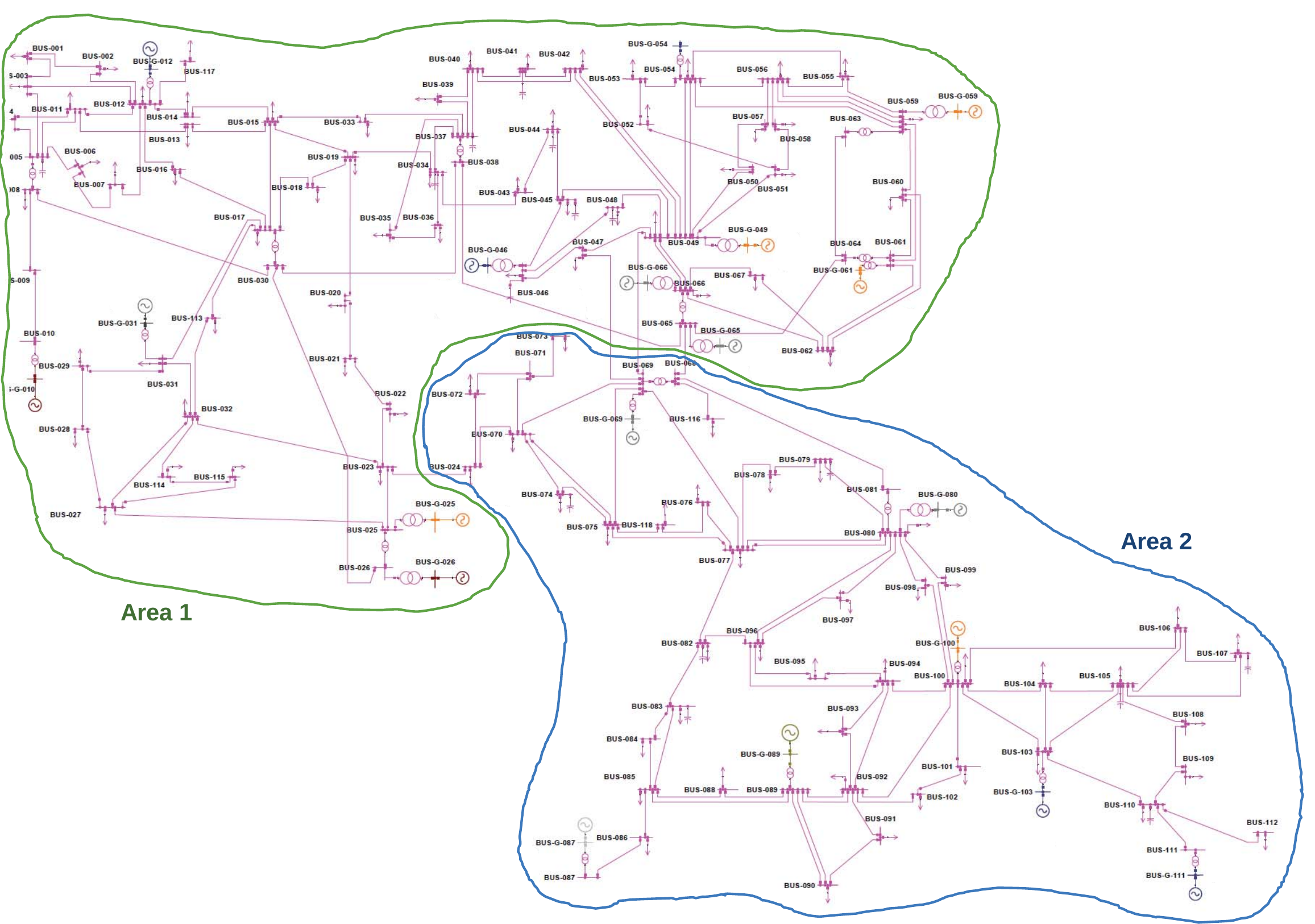}
			\caption{IEEE 118-bus system divided into two control areas}\label{fig:2area}
		\end{figure}

	%------------------------------------------------------------------------------------------
	\subsection{Simulation Results} \label{FDI:subsec:test}
	%------------------------------------------------------------------------------------------
		\subsubsection{Test system}

	To illustrate the FDI methodology we employed the IEEE 118-bus system. The data of the model are retrieved from a snapshot available at \cite{ref:data118}. It includes 19 generators, 177 lines, 99 load buses and 7 transmission level transformers. Since there were no dynamic data available, typical values provided by \cite{ref:Andreson_Fouad} were used for the simulations. The network was arbitrarily divided into two control areas whose nonlinear frequency model was developed in the preceding subsections. Figure \ref{fig:2area} depicts a single-line diagram of the network and the boundaries of the two controlled areas where the first and second area contain, respectively, $12$ and $7$ generators.
	
		\subsubsection{Numerical results}
	In the first simulation we consider the scenario that an attacker manipulates the AGC signal of the first area at $T_{ack} = 10 \sec$. We model this intrusion as a step signal equal to $14~MW$ injected into the AGC in Area 1. To challenge the filter, we also assume that a step load deviation occurs at $T_{load} = 1 \sec$ at node $5$. In the following we present the results of two filters: Figure \ref{fig:LP} shows the filter based on formulation \eqref{LP-opt} in Approach (I), which basically neglects the nonlinear term; Figure \ref{fig:QP} shows the proposed filter in \eqref{APr} based on $\Ap$ perspective where the payoff function is $J(\alpha) \Let \alpha^2$; see Remark \ref{rem:AP} why such a payoff function is of particular interest. 
	
	\begin{figure}[t]
		\centering
			\subfigure[Performance of the filter neglecting the nonlinear term]{\label{fig:LP}\includegraphics[scale = 0.14]{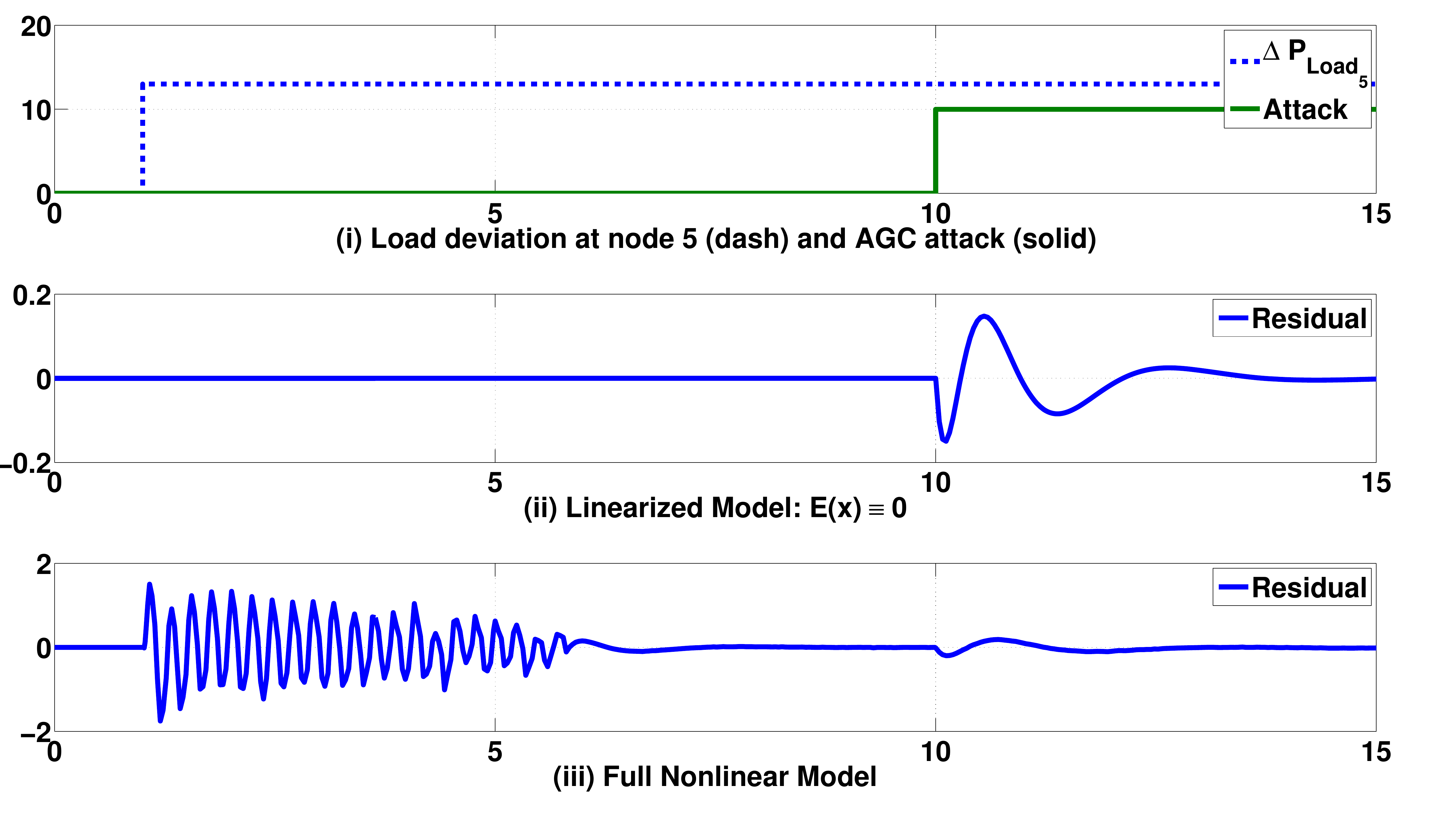}}  
			\subfigure[Performance of the filter trained for the step signatures]{\label{fig:QP}\includegraphics[scale = 0.14]{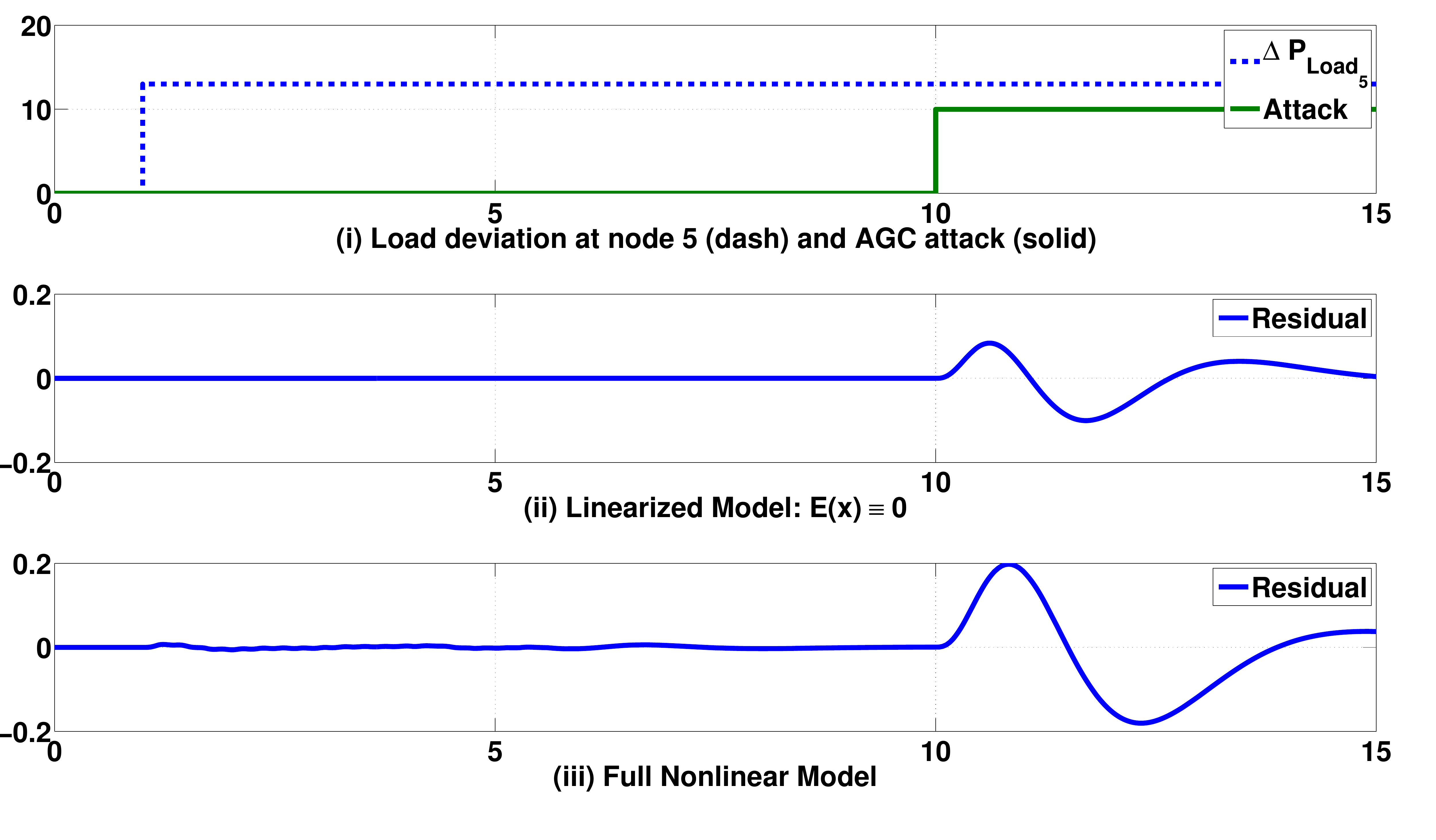}}
			\caption{Performance of the FDI filters with step inputs}
			\label{fig:step}
	\end{figure}

	We validate the filters performance with two sets of measurements: first the measurements obtained from the linearized dynamic (i.e.\ $E(x) \equiv 0$); second the measurements obtained from the full nonlinear model \eqref{ode-area}. As shown in Fig.\ \ref{fig:LP}(ii) and Fig.\ \ref{fig:QP}(ii), both filters work perfectly well with linear dynamics measurements. It even appears that the first filter seems more sensitive. However, Fig.\ \ref{fig:LP}(iii) and Fig.\ \ref{fig:QP}(iii) demonstrate that in the nonlinear setting the first filter fails whereas the robustified filter works effectively similar to the linear setting.  
	
	In the second simulation, to evaluate the filter performance in more realistic setup, we robustify the filter to random disturbance patterns, and then verify it with new generated samples. To measure the performance in the presence of the attack, we introduce the following indicator:
		\begin{align}
		\label{rho}
			\rho \Let \frac{\max\limits_{t \le T_{ack}} \|r(t)\|_{\infty}}{\max\limits_{t \le T}\|r(t)\|_{\infty}},
		\end{align}
	where $r$ is the residual \eqref{non-res}, and $T_{ack}$ is when the attack starts. Observe that $\rho \in [0,1]$, and the lower $\rho$ the better performance for the filter, e.g., in Fig.\ \ref{fig:LP}(iii) $\rho = 1$, and in Fig.\ \ref{fig:QP}(iii) $\rho \approx 0$. 

	\begin{figure}[t]
		\includegraphics[width = 0.6\textwidth]{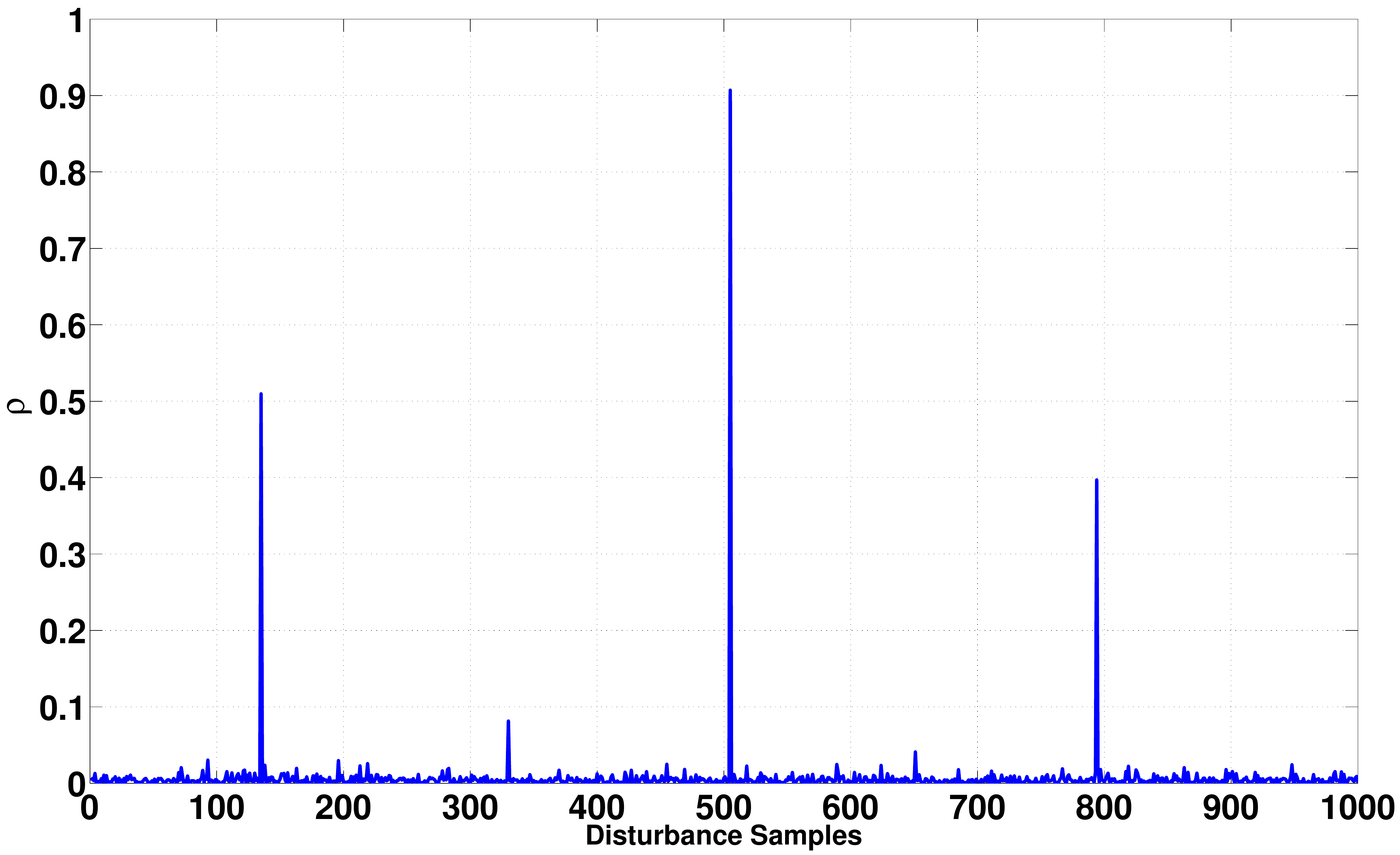}
		\centering
		\caption{The indicator $\rho$ defined in \eqref{rho}}
		\label{fig:rho}
	\end{figure}
	
	In the training phase, we randomly generate five sinusoidal load deviations as described in \eqref{load}, and excite the dynamics for $T = 10~sec$ in the presence of each of the load deviations individually. Hence, in total we have $n = 19 \times 5 = 95$ disturbance signatures. Then, we compute the filter coefficients by virtue of $\APr$ in \eqref{APr} with the payoff function $J(\alpha) \Let \alpha^2$ and these $95$ samples. In the operation phase, we generate two new disturbance patterns with the same distribution as in the training phase and run the system in the presence of both load deviations simultaneously at two random nodes for the horizon $T = 120 \sec$. Meanwhile, we inject an attack signal at $T_{ack} = 110 \sec$ in the AGC, and compute the indicator $\rho$ in \eqref{rho}. Figure \ref{fig:rho} demonstrates the result of this simulation for $1000$ experiments. 
%==========================================================================================
\section{Conclusion and Future Directions} \label{FDI:sec:sum}
%==========================================================================================
	In this article, we proposed a novel perspective toward the FDI filter design, which is tackled via an optimization-based methodology along with probabilistic performance guarantees. Thanks to the convex formulation, the methodology is applicable to high dimensional nonlinear systems in which some statistical information of exogenous disturbances are available. Motivated by our earlier works, we deployed the proposed technique to design a diagnosis filter to detect the AGC malfunction in two-area power network. The simulation results validated the filter performance, particularly when the disturbance patterns are different from training to the operation phase. 
	
	The central focus of the work here is to robustify the filter to certain signatures of dynamic nonlinearities in the presence of given disturbance patterns. As a next step, motivated by applications that the disruptive attack may follow certain patterns, a natural question is whether the filter can be trained to these attack patterns. From the technical standpoint, this problem in principle may be different from the robustification process since the former may involve maximization of the residual norm as opposed to the minimization for the robustification discussed in this article. Therefore, this problem offers a challenge to reconcile the disturbance rejection and the fault sensitivity objectives. 
	
	The proposed methodology in this study is applicable to both discrete and continuous-time dynamics and measurements. In reality, however, we often have different time-setting in different parts, i.e., we only have discrete-time measurements while the system dynamics follows a continuous-time behavior. We believe this setup introduces new challenges to the field. We recently reported heuristic attempts toward this objective in \cite{ref:Eralia-CDC13}, though there is still a need to address this problem in a rigorous and systematic framework.

%==========================================================================================
\section*{Acknowledgment}
%==========================================================================================

The authors are grateful to M.\ Vrakopoulou and G.\ Andersson for the help on the AGC case study. The first author also thanks G.\ Schildbach for fruitful discussions on randomized algorithms.

%===============================================================================
\renewcommand{\thesection}{I}
\section{Appendix}\label{FDI:sec:app}
\setcounter{equation}{0}
\numberwithin{equation}{section}
%===============================================================================

%------------------------------------------------------------------------------------------
	\subsection{Proofs of Section \ref{FDI:subsec:nonlinear}} \label{FDI:subsec:pf:nonlinear}
%------------------------------------------------------------------------------------------
	Let us start with a preliminary required for the main proof of this section.

	\begin{Lem}
	\label{lem:Hnorm}
		Let $N(p) \Let \sum^{d_N}_{i=0} N_i p^i$ be an $\R^{n_r}$ row polynomial vector with degree $d_N$, and $a(p)$ be a stable polynomial with the degree at least $d_N$. Let $\bar{N} \Let [N_0 \ N_1 \ \cdots \ N_{d_N}]$ be the collection of  the coefficients of $N(p)$. Then, 
		\begin{align*}
			\big \|a^{-1}N \big\|_{\Hinf} \le \widetilde C \|\bar N\|_{\infty}, \qquad \widetilde C\Let \sqrt{n_r (d_N + 1)} ~ \|a^{-1}\|_{\Hinf}.
		\end{align*}
	\end{Lem}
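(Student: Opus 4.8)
The plan is to pass to the frequency domain and reduce the matrix $\Hinf$ norm to a scalar estimate. Since $a^{-1}(p)N(p)$ is a $1\times n_r$ (row‑vector) transfer function, at each frequency the matrix $a^{-1}(j\omega)N(j\omega)$ has a single nonzero singular value, namely its Euclidean length. Thus
\[
	\big\|a^{-1}N\big\|_{\Hinf} = \sup_{\omega\in\R}\ \overline{\sigma}\big(a^{-1}(j\omega)N(j\omega)\big) = \sup_{\omega\in\R}\ \frac{\|N(j\omega)\|_2}{|a(j\omega)|},
\]
so it suffices to bound this pointwise ratio by $\widetilde C\,\|\bar N\|_\infty$ uniformly in $\omega$.

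Next I would factor the coefficient dependence out of $N(j\omega)$. Writing $\Phi(j\omega)\Let[\,I_{n_r},\,(j\omega)I_{n_r},\,\dots,\,(j\omega)^{d_N}I_{n_r}\,]\tr$, one has $N(j\omega)=\bar N\,\Phi(j\omega)$, whence
\[
	\frac{\|N(j\omega)\|_2}{|a(j\omega)|} = \frac{\|\bar N\,\Phi(j\omega)\|_2}{|a(j\omega)|} \le \|\bar N\|_2\,\frac{\overline{\sigma}\big(\Phi(j\omega)\big)}{|a(j\omega)|}.
\]
Two elementary facts then feed in. First, since $\bar N$ collects $n_r(d_N+1)$ real entries, $\|\bar N\|_2\le\sqrt{n_r(d_N+1)}\,\|\bar N\|_\infty$, which produces exactly the dimensional constant appearing in $\widetilde C$. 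Second, a direct computation gives $\Phi^\ast(j\omega)\Phi(j\omega)=\big(\sum_{i=0}^{d_N}\omega^{2i}\big)I_{n_r}$, so all singular values of $\Phi(j\omega)$ coincide and $\overline{\sigma}\big(\Phi(j\omega)\big)=\big(\sum_{i=0}^{d_N}\omega^{2i}\big)^{1/2}$.

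After this bookkeeping the whole statement collapses to the scalar frequency‑domain inequality
\[
	\sup_{\omega\in\R}\ \frac{\big(\sum_{i=0}^{d_N}\omega^{2i}\big)^{1/2}}{|a(j\omega)|}\ \le\ \|a^{-1}\|_{\Hinf} = \sup_{\omega\in\R}\frac{1}{|a(j\omega)|},
\]
and this is the step I expect to be the real obstacle, since it is where stability and the degree hypothesis $\deg a\ge d_N$ must be used. The natural route is termwise: control each $|\omega|^{i}/|a(j\omega)|$ for $0\le i\le d_N$, using that stability keeps $|a(j\omega)|$ bounded away from zero and that $\deg a\ge d_N$ makes each $p^i/a(p)$ proper, hence bounded on the imaginary axis. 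At low frequency the $i=0$ term $1/|a|$ dominates, and at high frequency the denominator outweighs the numerator. I anticipate the genuinely delicate case is the top‑degree term $i=d_N$ when $\deg a=d_N$, where $p^{d_N}/a$ is only proper (not strictly proper) so the ratio need not decay; there one must argue carefully -- e.g. via a maximum‑modulus bound on the closed right half‑plane -- that it is still dominated by the peak of $1/|a|$, rather than treating the term as negligible. Establishing this uniform frequency bound is the crux, after which the two norm‑equivalence estimates assemble immediately into $\big\|a^{-1}N\big\|_{\Hinf}\le\sqrt{n_r(d_N+1)}\,\|a^{-1}\|_{\Hinf}\,\|\bar N\|_\infty$.
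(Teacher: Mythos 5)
Your reduction is sound and, up to the last display, actually \emph{tighter} than the paper's own argument: the paper bounds $|b(j\omega)|^2$ by $\sum_i|b_i|^2\omega^{2i}$ without the Cauchy--Schwarz factor $(d_N+1)$ (false for, e.g., $b(p)=1-p^2$ at $\omega=1$), whereas your identity $\Phi^\ast(j\omega)\Phi(j\omega)=\big(\sum_{i=0}^{d_N}\omega^{2i}\big)I_{n_r}$ gives the correct pointwise estimate $\|N(j\omega)\|_2\le\|\bar N\|_2\big(\sum_{i}\omega^{2i}\big)^{1/2}$. The genuine gap is exactly the step you flag as the crux, and it cannot be closed: the inequality $\sup_\omega\big(\sum_{i=0}^{d_N}\omega^{2i}\big)^{1/2}/|a(j\omega)|\le\sup_\omega 1/|a(j\omega)|$ is false in general, and no maximum-modulus argument will rescue it, because the left-hand side is driven by high frequencies while $\|a^{-1}\|_{\Hinf}$ is attained near $\omega=0$. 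Concretely, take $n_r=1$, $N(p)=p$ (so $d_N=1$, $\|\bar N\|_\infty=1$) and $a(p)=p+2$: then $\|a^{-1}N\|_{\Hinf}=\sup_\omega\omega/\sqrt{\omega^2+4}=1$, while $\widetilde C\,\|\bar N\|_\infty=\sqrt{2}\cdot\tfrac12=1/\sqrt{2}$. So not only does your final inequality fail; the lemma itself is false with the stated constant.

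You should not feel obliged to repair this along the proposed lines, because the paper's own proof commits the same error: in passing from its bound $\sum_i|b_i|^2\omega^{2i}\le(d_N+1)\|\bar b\|_\infty^2\,\omega^{2d_N}$ (valid for $\omega>1$) to the conclusion, it tacitly uses $\omega^{2d_N}/|a(j\omega)|^2\le\|a^{-1}\|_{\Hinf}^2$, which is the same false claim. The correct statement, which your argument proves verbatim once you stop forcing the last inequality, is $\|a^{-1}N\|_{\Hinf}\le\sqrt{n_r(d_N+1)}\,\big(\max_{0\le i\le d_N}\|p^{i}a^{-1}\|_{\Hinf}\big)\|\bar N\|_\infty$, where each $\|p^{i}a^{-1}\|_{\Hinf}$ is finite precisely because $a$ is stable and $\deg a\ge d_N$ makes $p^{i}/a$ proper. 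Since the only downstream use of the lemma (in \eqref{eq:app1}, Proposition \ref{prop:app} and Lemma \ref{lem:phi}) is the existence of \emph{some} finite constant depending only on $a$, $d_N$ and $n_r$, this corrected constant suffices everywhere.
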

	
	\begin{proof}
		Let $b(p) \Let \sum^{d_N}_{i=0} b_i p^i$ be a polynomial scaler function. By $\Hinf$-norm definition we have
		\begin{align}
		\label{eq:1}
			\big \|a^{-1}b \big\|_{\Hinf}^2 = \sup_{\omega \in (-\infty,\infty)} \Big|  \frac{b(j\omega)}{a(j\omega)} \Big|^2 \le \sup_{\omega \in [0,\infty)} \frac{ \sum^{d_N}_{i=0}|b_i|^2 \omega^{2i}}{|a(j\omega)|^2}. 
		\end{align}
		Let $\bar b \Let \begin{bmatrix} b_0 & b_1 & \cdots & b_{d_N} \\ \end{bmatrix}$. It is then straightforward to inspect that 
		\begin{align}
		\label{eq:2}
			\sum^{d_N}_{i=0}|b_i|^2 \omega^{2i} \le 
			\begin{cases}
				(d_N + 1) \| \bar b \|_{\infty}^2 &\text{if} \quad \omega \in [0,1] \\
				(d_N + 1) \| \bar b \|_{\infty}^2 \omega^{2d_N} &\text{if} \quad \omega \in (1,\infty)
			\end{cases}
		\end{align}
		Therefore, \eqref{eq:1} together with \eqref{eq:2} yields to 
		\begin{align*}
			\big \|a^{-1}b \big\|_{\Hinf}^2 \le (d_N + 1) \big \|a^{-1} \big \|^2_{\Hinf} \|\bar{b}\|_\infty^2.
		\end{align*}
		Now, taking the dimension of the vector $N(p)$ into consideration, we conclude the desired assertion.		
	\end{proof}

	\begin{proof}[Proof of Lemma \ref{lem:psi}]
		Let $\ell \ge d_N$ be the degree of the scalar polynomial $a(p)$. Then, taking advantage of the state-space representation of the matrix transfer function $a^{-1}(p)N(p)$, in particular the observable canonical form \cite[Section 3.5]{ref:ZhouDoyle}, we have
			\begin{align*}
				r_x(t) = \int_0^t C\e^{-A(t-\tau)}B e_x(\tau) \diff \tau + De_x(t), 
			\end{align*}
		where $C \in \R^{1 \times \ell}$ is a constant vector, $A \in \R^{\ell \times \ell}$ is the state matrix depending only on $a(p)$, and $B \in \R^{\ell \times n_r}$ and $D \in \R^{1 \times n_r}$ are matrices that depend linearly on all the coefficients of the numerator $\bar N \in \R^{n_r(d_N+1)}$. Therefore, it can be readily deduced that \eqref{psi-rx} holds for some function $\psi_x \in \W^{n_r(d_N+1)}_T$. In regard to \eqref{psi-rx} and the definition \eqref{r_x}, we have
			\begin{align}
			\label{eq:app1}
				\| \bar N \psi_x \|_{\Lnorm} = \|r_x\|_{\Lnorm} =  \big\| a^{-1}(p)N(p)e_x \big\|_{\Lnorm} \le \big\|a^{-1}N \big\|_{\Hinf}\|e_x\|_{\Lnorm} \le \widetilde C \|\bar N\|_{\infty} \|e_x\|_{\Lnorm},
			\end{align}
		where the first inequality follows from the classical result that the $\Lnorm$-gain of a matrix transfer function is the $\Hinf$-norm of the matrix \cite[Theorem 4.3, p.\ 51]{ref:ZhouDoyle}, and the second inequality follows from Lemma \ref{lem:Hnorm}. Since \eqref{eq:app1} holds for every $\bar N \in \R^{n_r(d_N+1)}$, then 
		%Setting $\bar N$ as a vector with all zeros but one in \eqref{eq:app1}, one can infer that
			\begin{align*}
				\|\psi_x\|_{\Lnorm} \le \sqrt{n_r (d_N + 1)} ~ \widetilde C \|e_x\|_{\Lnorm},
			\end{align*}
		which implies \eqref{psi-norm}. 		
	\end{proof}

	\begin{proof}[Proof of Proposition \ref{prop:app}]
			Observe that by virtue of the triangle inequality and linearity of the projection mapping we have
				\begin{align*}
				%\label{eq:err-bound}
					\big | \|r_x \|_{\Lnorm} - \big\| a^{-1}(p)N(p)\proj(e_x)\big\|_{\Lnorm} \big | 
					%\le \big \|r_x - a^{-1}(p)N(p)\proj(e_x) \big \|_{\Lnorm}
					\le \big\| a^{-1}(p)N(p)\big(e_x - \proj(e_x)\big) \big\|_{\Lnorm} 
					%\le \big\|a^{-1} N \big\|_{\Hinf} \big\| e_x - \proj(e_x) \big\|_{\Lnorm} 
					\le \widetilde C \|\bar N \|_\infty \delta,
				\end{align*}
			where the second inequality follows in the same spirit as \eqref{eq:app1} and $\| e_x - \proj(e_x)\|_{\Lnorm} \le \delta$. Note that by definitions of $Q_x$ and $Q_{\B}$ in \eqref{Q_x} and \eqref{Q}, respectively, we have
				\begin{align*}
					\big | \bar N (Q_x - Q_{\B}) \bar N\tr \big | & = \big | \|r_x \|^2_{\Lnorm} - \big\| a^{-1}(p)N(p)\proj(e_x)\big\|^2_{\Lnorm} \big | \le \widetilde C \|\bar N \|_\infty \delta \big( \widetilde C \|\bar N \|_\infty \delta + 2\|r_x\|_{\Lnorm} \big )\\
					& \le 	\widetilde C^2 \|\bar N \|^2_\infty \delta \big( \delta	+ 2 \|e_x\|_{\Lnorm}\big) \le
					C \|a^{-1}\|_{\Hinf} \|\bar N \|^2_2\delta \big( 1 + 2 \|e_x\|_{\Lnorm}\big) 
				\end{align*}
				where the inequality of the first line stems from the simple inequality {\small $|\alpha^2 - \beta^2| \le |\alpha - \beta|\big(2|\alpha| + |\alpha - \beta|\big)$}, and $C$ is the constant as in \eqref{psi-norm}. 
	\end{proof}

%------------------------------------------------------------------------------------------
	\subsection{Proofs of Section \ref{FDI:subsec:perf}} \label{FDI:subsec:pf:perf}
%------------------------------------------------------------------------------------------
	To prove Theorem \ref{thm:AP} we need a preparatory result addressing the continuity of the mapping $\phi$ in \eqref{phi}.
		\begin{Lem}
			\label{lem:phi}
			Consider the function $\phi$ as defined in \eqref{phi}. Then, there exists a constant $L > 0$ such that for any $\bar N_1, \bar N_2 \in \NN$ and $x_1, x_2 \in \W^{n_x}_T$ where $\| x_i \|_{\Lnorm} \le M$, we have
				\begin{align*}
					\big| \phi(\bar N_1,x_1) - \phi(\bar N_2,x_2) \big| \le L \big( \big \| \bar N_1 - \bar{N}_2 \big\|_\infty + \|x_1 - x_2 \|_{\Lnorm} \big).
				\end{align*}
		\end{Lem}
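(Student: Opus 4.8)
The plan is to exploit the representation $\phi(\bar N,x) = J\big(\|\bar N \psi_x\|_{\Lnorm}\big)$ from \eqref{phi} and to peel off the two sources of variation one at a time, reducing everything to (i) the local Lipschitz continuity of the convex payoff $J$ on a fixed compact interval, and (ii) the linearity of the residual map $e_x \mapsto r_x$ combined with the Lipschitz continuity of $E$.

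\textbf{Step 1 (a uniform compact range for $J$).} First I would show that on the admissible set $\{\|\bar N\|_\infty \le 1,\ \|x\|_{\Lnorm} \le M\}$ the argument $\|\bar N\psi_x\|_{\Lnorm} = \|r_x\|_{\Lnorm}$ stays in a fixed interval $[0,R]$. Since $E$ is Lipschitz with constant $L_E$, one has $\|e_x\|_{\Lnorm} = \|E\big(x(\cdot)\big)\|_{\Lnorm} \le \sqrt{T}\,\|E(0)\|_2 + L_E\|x\|_{\Lnorm} \le \sqrt{T}\,\|E(0)\|_2 + L_E M$, and then the gain estimate behind \eqref{eq:app1} together with \eqref{psi-norm} gives $\|r_x\|_{\Lnorm} \le \widetilde C\,\|\bar N\|_\infty\,\|e_x\|_{\Lnorm} \le R$ for a constant $R$ depending only on $M,T,L_E,\widetilde C,\|E(0)\|_2$. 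Because $J$ is convex on $\R_+$ it is Lipschitz on the compact interval $[0,R]$ with some constant $L_J \Let J'_+(R) < \infty$, so $|J(s_1)-J(s_2)| \le L_J|s_1-s_2|$ for all $s_1,s_2 \in [0,R]$.

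\textbf{Step 2 (splitting the two variations).} Using the $L_J$-Lipschitz property of $J$, the reverse triangle inequality for the $\Lnorm$-norm, and inserting the cross term $\bar N_2\psi_{x_1}$, I obtain
\begin{align*}
  \big|\phi(\bar N_1,x_1)-\phi(\bar N_2,x_2)\big| &\le L_J\big\|\bar N_1\psi_{x_1}-\bar N_2\psi_{x_2}\big\|_{\Lnorm} \\
  &\le L_J\big\|(\bar N_1-\bar N_2)\psi_{x_1}\big\|_{\Lnorm} + L_J\big\|\bar N_2(\psi_{x_1}-\psi_{x_2})\big\|_{\Lnorm}.
\end{align*}
For the first term, a pointwise Cauchy--Schwarz bound $|(\bar N_1-\bar N_2)\psi_{x_1}(t)| \le \|\bar N_1-\bar N_2\|_2\,\|\psi_{x_1}(t)\|_2$ integrated over $[0,T]$ yields $\|(\bar N_1-\bar N_2)\psi_{x_1}\|_{\Lnorm} \le \|\bar N_1-\bar N_2\|_2\,\|\psi_{x_1}\|_{\Lnorm}$; finite-dimensional norm equivalence and the bound $\|\psi_{x_1}\|_{\Lnorm} \le C\|e_{x_1}\|_{\Lnorm}$ from \eqref{psi-norm}, uniformly controlled by Step 1, give $\|(\bar N_1-\bar N_2)\psi_{x_1}\|_{\Lnorm} \le c_1\|\bar N_1-\bar N_2\|_\infty$.

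\textbf{Step 3 (the $x$-term and conclusion).} The key observation for the second term is that, for the \emph{fixed} filter $\bar N_2$, the quantity $\bar N_2\psi_x = r_x[\bar N_2]$ is linear in $e_x = E(x)$, so that $\bar N_2(\psi_{x_1}-\psi_{x_2}) = -a^{-1}(p)N_2(p)\big[e_{x_1}-e_{x_2}\big]$. Applying the $\Hinf$/$\Lnorm$-gain estimate exactly as in \eqref{eq:app1} and Lemma \ref{lem:Hnorm}, followed by the Lipschitz continuity of $E$, gives $\|\bar N_2(\psi_{x_1}-\psi_{x_2})\|_{\Lnorm} \le \widetilde C\,\|\bar N_2\|_\infty\,\|e_{x_1}-e_{x_2}\|_{\Lnorm} \le \widetilde C\,L_E\,\|x_1-x_2\|_{\Lnorm} \Let c_2\|x_1-x_2\|_{\Lnorm}$. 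Collecting the two bounds and setting $L \Let L_J\max\{c_1,c_2\}$, so that each increment is controlled against the single sum, yields the claim. The main obstacle is Step 1: one must secure the \emph{uniform} compact range of the argument of $J$, since a merely increasing $J$ need not be Lipschitz and the estimate would fail without it; this is exactly where the boundedness hypothesis $\|x\|_{\Lnorm}\le M$, the Lipschitz continuity of $E$, and Lemma \ref{lem:psi} are all needed. The only other delicate point is Step 3, where the $x$-dependence must be routed through the linear residual map $e_x\mapsto r_x$ rather than through $\psi_x$ directly, since $\psi_x$ is only defined implicitly in Lemma \ref{lem:psi}.
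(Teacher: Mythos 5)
Your proof is correct and takes essentially the same route as the paper's: establish a uniform compact range for the argument of $J$, invoke convexity to obtain a local Lipschitz constant $L_J$, and split the increment by inserting the cross term, bounding the $\bar N$-variation and the $x$-variation separately via the linearity of the residual map in $e_x$ and the Lipschitz continuity of $E$. The only cosmetic differences are that you route the $\bar N$-increment through Cauchy--Schwarz on $\psi_{x_1}$ and \eqref{psi-norm} instead of through the bilinearity of $(\bar N,e_x)\mapsto r_x[\bar N]$ together with \eqref{eq:app1}, and that you are slightly more careful about a possibly nonzero $E(0)$ in the uniform bound.
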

		
	\begin{proof}
	Let $L_E$ be the Lipschitz continuity constant of the mapping $E:\R^{n_x} \ra \R^{n_r}$ in \eqref{r_x}. We modify the notation of $r_x$ in \eqref{r_x} with a new argument as $r_{x}[\bar N]$, in which $\bar N$ represents the filter coefficients. Then, with the aid of \eqref{eq:app1}, we have 
		\begin{align*}
			\sup_{ \|x\|_{\Lnorm} \le M} \sup_{\bar N \in \NN}  \|r_x[\bar N]\|_{\Lnorm} 
			\le \sup_{ \|x\|_{\Lnorm} \le M} \sup_{\bar N \in \NN} \widetilde C L_E \|\bar N\|_{\infty} \|x\|_{\Lnorm} 
			\le \widetilde M, \qquad \widetilde M \Let \widetilde{C} L_E M,
		\end{align*}
	where the constant $\widetilde{C}$ is introduced in Lemma \ref{lem:Hnorm}. As the payoff function $J$ is convex, it is then Lipschitz continuous over the compact set $[0,\widetilde M]$ \cite[Proposition 5.4.2, p.\ 185]{ref:Bert-09}; we denote this Lipschitz constant by $L_J$. Then for any $\bar N_i \in \NN$ and $\|x_i\|_{\Lnorm} \le M$, $i \in \{1,2\}$, we have, 
		\begin{align}
			\big|\phi(\bar N_1,x_1)- \phi(\bar N_2,x_2)\big| 
			\notag &\le L_J \big| \big\|r_{x_1}[N_1] \big\|_{\Lnorm} - \big\| r_{x_2}[N_2] \big\|_{\Lnorm} \big| \\
			\notag &\le L_J \Big( \big\|r_{x_1}[N_1] - r_{x_1}[N_2] \big\|_{\Lnorm}+  \big\| r_{x_1}[N_2] - r_{x_2}[N_2] \big\|_{\Lnorm} \Big)\\
			\label{eq:lem:1} &\le L_J \Big( \widetilde{C}\|e_{x_1}\|_{\Lnorm} \|N_1 - N_2\|_\infty +  \widetilde{C}\|e_{x_1} - e_{x_2}\|_{\Lnorm} \|N_2\|_\infty \Big)\\
			\notag & \le L_J\widetilde{C}L_E\big( M\|N_1 - N_2\|_\infty + \|{x_1} - {x_2}\|_{\Lnorm} \big). 			
		\end{align}	
	where \eqref{eq:lem:1} follows from \eqref{eq:app1} and the fact that the mapping $(\bar N, e_x) \mapsto r_x[\bar N]$ is bilinear.
	\end{proof}
		
		\begin{proof}[Proof of Theorem \ref{thm:AP}]
			By virtue of Lemma \ref{lem:phi}, one can infer that for every $\bar N \in \NN$ the mapping $x \mapsto \phi(\bar N, x)$ is continuous, and hence measurable. Therefore, $\phi(\bar N, x)$ can be viewed as a random variable for each $\bar N \in \NN$, which yields to the first assertion, see \cite[Chapter 2, p.\ 84]{ref:Bil-99} for more details. 
			
			By uniform (almost sure) boundedness and again Lemma \ref{lem:phi}, the mapping $\bar N \mapsto \phi(\bar N,x)$ is uniformly Lipschitz continuous (except on a negligible set), and consequently first moment continuous in the sense of \cite[Definition 2.5]{ref:Hansen-2012}. We then reach \ref{slln} by invoking \cite[Theorem 2.1]{ref:Hansen-2012}. 
			
			For assertion \ref{uclt}, note that the compact set $\NN$ is finite dimensional, and thus admits a logarithmic $\eps$-capacity in the sense of \cite[Section.\ 1.2, p.\ 11]{ref:Dudley99}. Therefore, the condition \cite[(6.3.4), p.\ 209]{ref:Dudley99} is satisfied. Since the other requirements of \cite[Theorem 6.3.3, p.\ 208]{ref:Dudley99} are readily fulfilled by the uniform boundedness assumption and Lemma \ref{lem:phi}, we arrive at the desired UCLT assertion in \ref{uclt}.						
		\end{proof}

	To keep the paper self-contained, we provide a proof for Theorem \ref{thm:CP} in the following, but refer the interested reader to \cite[Theorem 4.1]{ref:MohSut-13} for a result of a more general setting.
	
	\begin{proof}[Proof of Theorem \ref{thm:CP}]
		The measurability of $\mathcal{E}$ is a straightforward consequence of the measurability of $[\bar N_n \opt, \gamma_n \opt]$ and Fubini's Theorem \cite[Theorem 18.3, p.\ 234]{ref:Bil-95}. For notational simplicity, we introduce the following notation. Let $\ell \Let n_r(d_N + 1)+1$ and define the function $f: \R^{\ell} \times \W^{n_x}_T \ra \R$ 
				\begin{align*}
					f(\theta, x) \Let \bar N Q_x \bar N\tr - \gamma, \qquad \theta \Let [\bar N, \gamma ]\tr \in \R^\ell, 
				\end{align*}	
		where $Q_x$ is the nonlinearity signature matrix of $x$ as defined in \eqref{Q_x}, and $\theta$ is the augmented vector collecting all the decision variables. Consider the convex sets $\Theta_j \subset \R^\ell$
			\begin{align*}
				%\label{theta:FDI}
				\Theta_j \Let \Big\{ \theta = [\bar N, \gamma]\tr ~ \big | ~ \bar N  \bar H = 0, ~ \bar N \bar F v_j \ge 1\Big \}, 
				\qquad v_j \Let \overset{ \quad \da ~j^{\text{th}} }{\big[0,\cdots, {1} , \cdots, 0 \big]\tr}, %\begin{bmatrix} 0 & \cdots & 1 & \cdots& 0 \end{bmatrix}\tr,
			\end{align*}
				%\underset{\begin{array}{c} \uparrow \\ j^{\text{th}} \end{array}}{1}
		where the size of $v_j$ is $m \Let n_f(d_F + d_N + 1)$. Note that in view of Lemma \ref{lem:inf norm}, we can replace the characterization of the filter coefficients in \eqref{LP} with $\theta \in \bigcup_{j = 1}^m \Theta_j$. We then express the program $\Cp$ in \eqref{rcp-ccp} and its random counterpart $\CPr_1$ in \eqref{CPr_1} as follows:
			\begin{align*} 
			%\label{CC}
				\Cp: \left\{ \begin{array}{cll}
						\min\limits_{\theta \in \bigcup\limits_{j = 1}^m \Theta_j}		&  c\tr \theta   \\
						 \text{s.t. }	& \PP \big( f(\theta,x) \le 0 \big) \ge 1- \eps 
							\end{array} \right.
				\quad  
				\CPr_1: \left\{ \begin{array}{cll}
						\min\limits_{\theta \in \bigcup\limits_{j = 1}^m \Theta_j}		&  c\tr \theta   \\
						 \text{s.t. }	& \max\limits_{i \le n}f(\theta,x_i) \le 0,
							\end{array} \right.
			\end{align*}
		where $c$ is the constant vector with $0$ elements except the last which is $1$. It is straightforward to observe that the optimal threshold $\gamma_n\opt$ of the two-stage program $\CPr$ in \eqref{CPr} is the same as the optimal threshold obtained in the first stage $\CPr_1$. Thus, it suffices to show the desired assertion considering only the first stage. Let $\theta_n\opt \Let [\bar N_n \opt, \gamma_n \opt]$ denote the optimizer of $\CPr_1$. Now, consider $m$ sub-programs denoted by $\CP{(j)}$ and $\CPr{(j)}$ for $j \in \{1,\cdots,m\}$:
			\begin{align*} 
			%\label{CC}
				\Cp{(j)}: \left\{ \begin{array}{cll}
						\min\limits_{\theta \in \Theta_j}		&  c\tr \theta   \\
						 \text{s.t. }	& \PP \big( f(\theta,x) \le 0 \big) \ge 1- \eps 
							\end{array} \right.
				\quad  
				\CPr{(j)}: \left\{ \begin{array}{cll}
						\min\limits_{\theta \in \Theta_j}		&  c\tr \theta   \\
						 \text{s.t. }	& \max\limits_{i \le n}f(\theta,x_i) \le 0,
							\end{array} \right.
			\end{align*}
		Let us denote the optimal solution of $\CPr{(j)}$ by $\theta^*_{n,j}$. Note that for all $j$, the set $\Theta_j$ is deterministic (not affected by $x$) and convex, and the corresponding random program $\CPr{(j)}$ is feasible if $\Theta_j \neq \emptyset$, thanks to the min-max structure of $\CPr{(j)}$. Therefore, we can readily employ the existing results of the random convex problems. Namely, by \cite[Theorem 1]{ref:CamGar-08} we have
			\begin{align*}
					\PP^n \big(\mathcal{E}(\theta^*_{n,j})\big)	< \sum_{i = 0}^{\ell - 1} {n \choose i} \eps^{i}(1-\eps)^{n-i}, \qquad \forall j \in \{1, \cdots, m\} 
					%\ell \Let n_r(d_N+1) + 2,
			\end{align*}
		where $\mathcal{E}$ is introduced in \eqref{event}. Furthermore, it is not hard to inspect that $\theta^*_{n} \in \big(\theta^*_{n,j}\big)_{j = 1}^{m}$. Thus, $\mathcal{E}(\theta^*_n) \subseteq \bigcup_{j=1}^m \mathcal{E}(\theta^*_{n,j})$ which yields 
		\begin{align*}
			\PP^n\big(\mathcal{E}(\theta^*_n) \big) \le \PP^n\Big( \bigcup_{j=1}^m \mathcal{E}(\theta^*_{n,j}) \Big) \le \sum_{j=1}^m \PP^n\big(\mathcal{E}(\theta^*_{n,j}) \big) < m \sum_{i = 0}^{\ell - 1} {n \choose i} \eps^{i}(1-\eps)^{n-i}.
		\end{align*}
		Now, considering $\beta$ as an upper bound, the desired assertion can be obtained by similar calculation as in \cite{ref:Cal-09} to make the above inequality explicit for $n$ in terms of $\eps$ and $\beta$. 		
	\end{proof}
	
%==========================================================================================
	\bibliographystyle{amsalpha} %abbrv siam %plain
	\bibliography{ref,ref_MohajerinEsfahani}

\end{document}